\documentclass[11pt]{amsart}

\usepackage{float}
\usepackage{graphicx} 
\usepackage{xcolor}
\usepackage{amsmath, amssymb, amsthm, amsfonts, enumerate, dsfont, mathtools, hyperref, comment}
\usepackage[resetlabels,labeled]{multibib}
\newcommand{\ie}{i.e.\ }
\newcommand{\eg}{e.g.\ }
\numberwithin{equation}{section}

\newcommand{\MLS}[0]{{\rm MLS}}



\renewcommand{\phi}{\varphi}

\newcommand\R{\mathds{R}}
\newcommand\K{\mathbb{K}}

\theoremstyle{plain}
\newtheorem{thm}{Theorem}[section]
\newtheorem{cor}[thm]{Corollary}
\newtheorem{lem}[thm]{Lemma}

\newtheorem{conj}[thm]{Conjecture}
\newtheorem{question}[thm]{Question}

\usepackage[parfill]{parskip}
\newtheorem{TOD}{TODO}

\theoremstyle{remark}
\newtheorem{rem}[thm]{Remark}

\title[Magnetic MLS rigidity for surfaces]{Marked length spectrum rigidity \\ for Anosov magnetic surfaces}
\author[V. Assenza, J. de Simoi, J. Marshall Reber, I. Terek]{Valerio Assenza \and Jacopo de Simoi, \\ James Marshall Reber \and Ivo Terek}

\address{Instituto de Matem\'{a}tica Pura e Aplicada, Rio de Janeiro, RJ, 22460-320, Brazil}
\email{valerio.assenza@impa.br}

\address{Department of Mathematics, University of Toronto, Toronto, ON, Canada}
\email{jacopods@math.utoronto.ca}

\address{Department of Mathematics, The Ohio State University, Columbus, OH, 43210, USA}
\email{marshallreber.1@osu.edu}

\address{Department of Mathematics and Statistics, Williams College, Williamstown, MA, 01267, USA}
\email{it3@williams.edu}

\begin{document}
	
	\begin{abstract}
		We show that if $M$ is a closed, connected, oriented surface, and two Anosov magnetic systems on $M$ are conjugate by a volume-preserving conjugacy isotopic to the identity, with their magnetic forms in the same cohomology class, then the metrics are isometric. This extends the recent result by Guillarmou, Lefeuvre, and Paternain to the magnetic setting.
	\end{abstract}
	\maketitle

	\section{Introduction} \label{sec:intro}
	Let $M$ be a closed,
	connected, oriented manifold, and let $g$ be a Riemannian metric
	on $M$. Denoting the corresponding geodesic flow on its unit
	tangent bundle by $\phi_g^t : S_gM \rightarrow S_gM$ and denoting
	the infinitesimal generator of the flow by $X^g$, we say that the
	metric $g$ is \emph{Anosov} if $\phi_{g}^{t}$ is an \emph{Anosov flow},
	i.e., there exists a $\phi_g^t$-invariant splitting
	$TS_gM = E^+ \oplus \R X^g \oplus E^-$ and constants $c, d > 0$ so
	that for all $t \geq 0$ and $\xi \in E^\pm(v)$, we have
	\begin{equation}\label{Anosovcondition}
		\|d_v\phi_g^{\mp t}(\xi)\| \leq d e^{-ct} \|\xi\|,
	\end{equation}
	where $\|\cdot\|$ denotes the norm coming from the Sasaki metric
	on $S_gM$.
	
	Anosov metrics can be seen as a generalization of negatively
	curved metrics \cite{anosov} and, as such, they share many common
	qualities. For example, observe that if $g$ is an Anosov
	metric, then for every non-trivial free homotopy class
	$\nu \in \pi_1(M)$, there is a unique closed $g$-geodesic
	$\gamma_\nu \in \nu$ \cite{klingenberg}. The \emph{marked length
		spectrum} is defined to be the function
	${\MLS(g) : \pi_1(M) \rightarrow [0,\infty)}$ which assigns to a
	non-trivial free homotopy class $\nu$ the length of its geodesic
	representative $\gamma_\nu$, and $0$ to the trivial free homotopy
	class. It is conjectured that $\MLS(g)$ uniquely characterizes the
	Anosov metric $g$. More precisely:
	\begin{conj}[\cite{guillarmou2023marked}]\label{MLSconjecture}
		Let $g_1$ and $g_2$ be two Anosov metrics on $M$. If ${\rm MLS}(g_1) = {\rm MLS}(g_2)$, then $g_1$ and $g_2$ are isometric by an isometry which is isotopic to the identity.
	\end{conj}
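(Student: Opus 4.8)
Conjecture~\ref{MLSconjecture} is a long-standing open problem in full generality; it is known for surfaces (Croke and Otal in negative curvature, and, in the general Anosov case, the result of Guillarmou, Lefeuvre and Paternain quoted in the abstract), and in all dimensions it is known \emph{locally} (Guillarmou--Lefeuvre). So any honest plan is to reproduce the surface argument. The starting point is to pass to the universal cover $\widetilde M$ together with its boundary at infinity $\partial_\infty\widetilde M$: since $g$ is Anosov it has no conjugate points, so each nontrivial $\nu\in\pi_1(M)$ acts on $\widetilde M$ with an axis projecting to $\gamma_\nu$, and $\MLS(g)(\nu)$ is exactly the translation length of $\nu$ along that axis. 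The first step is the classical observation that this translation-length data determines the \emph{cross-ratio}, a $\pi_1(M)$-invariant function on quadruples of distinct boundary points: one uses that the cross-ratio is additive under concatenation of boundary geodesics and that the pairs of fixed points of hyperbolic elements are dense in $\partial_\infty\widetilde M\times\partial_\infty\widetilde M$, so that the values $\MLS(g)(\nu)$ pin down the whole cross-ratio. Fixing the topological identification of the two boundaries coming from the common $\pi_1(M)$-action (the Morse correspondence), the hypothesis $\MLS(g_1)=\MLS(g_2)$ then forces the two cross-ratios to coincide.

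The second step converts equal cross-ratios into a genuine coincidence of metrics, via geodesic currents. The space of geodesics of $\widetilde M$ is identified with an open subset of $\partial_\infty\widetilde M\times\partial_\infty\widetilde M$, so the identification above realizes the two Liouville currents $L_{g_1},L_{g_2}$ as $\pi_1(M)$-invariant measures on \emph{the same} space; moreover each $L_{g_j}$ is itself reconstructible from the cross-ratio (Bonahon), and the length functional satisfies $\ell_{g_j}(\nu)=c\, i(L_{g_j},\gamma_\nu)$ for the intersection pairing $i$ and the current $\gamma_\nu$ of the class $\nu$. Since closed-geodesic currents are weak-$*$ dense among all currents, equal MLS upgrades to $i(L_{g_1},\mu)=i(L_{g_2},\mu)$ for every current $\mu$, hence in particular $i(L_{g_1},L_{g_1})=i(L_{g_2},L_{g_2})=i(L_{g_1},L_{g_2})$. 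Otal's integral-geometric argument — a reverse Cauchy--Schwarz-type inequality $i(L_{g_1},L_{g_2})\ge\sqrt{i(L_{g_1},L_{g_1})\,i(L_{g_2},L_{g_2})}$ whose equality case is analyzed by an isoperimetric estimate along individual geodesics — must therefore be an equality, and that equality case says precisely that the underlying metrics agree up to the diffeomorphism induced by the boundary identification. Since that diffeomorphism induces the identity on $\pi_1(M)$, on a surface it is isotopic to the identity, so the two metrics are isometric by an isometry isotopic to the identity.

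The step I expect to be the genuine obstacle — and the one that confines the scheme above to surfaces — is this last one: in dimension $\ge 3$ the two-dimensional cross-ratio/geodesic-current calculus has no substitute, the self-intersection of the Liouville current is not an isometry invariant, and the global conjecture is wide open; even the \emph{local} statement there is not soft and requires the microlocal machinery of Guillarmou--Lefeuvre (injectivity of the $X$-ray transform on symmetric $2$-tensors, solenoidal gauge fixing, ellipticity of the associated normal operator) rather than the boundary argument sketched here. For the magnetic version studied in the present paper one must, in addition, understand how the magnetic force term deforms the boundary dynamics, the Liouville current, and the relevant $X$-ray transform — and keep track of the cohomology class of the magnetic form — which is exactly where extending the Guillarmou--Lefeuvre--Paternain theorem demands new work.
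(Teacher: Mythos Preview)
The statement you address is recorded in the paper as a \emph{conjecture}, not as a result the paper proves; the paper cites it from \cite{guillarmou2023marked}, summarizes its status, and then turns to its own contribution, the magnetic analogue on surfaces (Theorems~\ref{thm:main1} and~\ref{thm:main2}). There is therefore no ``paper's proof'' of Conjecture~\ref{MLSconjecture} to compare against, and you are right that in dimension $\ge 3$ it is open.

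That said, your surface sketch has a genuine gap. What you outline---cross-ratio on $\partial_\infty\widetilde M$, Liouville currents, Bonahon's intersection pairing, and Otal's reverse Cauchy--Schwarz inequality with its equality analysis---is Otal's proof for \emph{negatively curved} metrics (and its nonpositive-curvature extension). The equality case of Otal's inequality is established via an angle/convexity argument that genuinely uses the curvature sign; this step is exactly what fails to carry over to an arbitrary Anosov metric, and is why the Anosov surface case remained open for three decades after Otal and Croke. The Guillarmou--Lefeuvre--Paternain theorem is \emph{not} proved by the current/cross-ratio route: it proceeds instead through smooth rigidity of the conjugacy, reduction to a conformal class, and microlocal/$X$-ray-transform input of the type you mention only in your final paragraph. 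So your plan, as written, proves the negative-curvature subcase but not the Anosov statement; the obstacle is not only the passage to dimension $\ge 3$, but already the passage from negative curvature to Anosov on a surface.
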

	In the case where $g_1$ and $g_2$ are both negatively curved
	metrics, this is the well-known \emph{Burns-Katok
		conjecture} \cite{burns1985manifolds}. Using Livshits' theorem
	\cite{livshits1972cohomology} along with an argument by Gromov \cite{gromov2000three}, one can show that if
	$\MLS(g_1) = \MLS(g_2)$, then there is a $C^0$-conjugacy between
	the corresponding geodesic flows which is isotopic to the
	identity. With this in mind, the conjecture broadly states that
	dynamical invariants give rise to geometric rigidity. It is known that
	Conjecture ~\ref{MLSconjecture} holds for non-positively curved
	metrics on surfaces \cite{croke1990rigidity, croke1992marked,
		otal1990spectre}, and for negatively curved metrics in
	dimensions three and higher when one of the metrics is locally
	symmetric \cite{BCG, hamenstdt1999cocycles}. For Anosov metrics,
	the conjecture has been recently proved for surfaces in
	\cite{guillarmou2023marked}; a local version (i.e.\ when the
	metrics are sufficiently close in an appropriate topology) holds
	in arbitrary dimension, provided the metrics have non-positive sectional curvature \cite{guillarmou2019marked}. The conjecture
	is still open in general.
	
	In this paper, we bring the above discussion to the context of
	magnetic dynamics. Namely, we study marked length spectrum
	rigidity for Riemannian metrics on surfaces in presence of a
	magnetic field.  Magnetic dynamics was formalized in
	\cite{anosov1967some, arnold2009some}, and has been investigated
	in many works in the last decades. For a general overview, we
	refer the reader to \cite{IVJ}, and the references
	therein.
	
	From now on, let $M$ be a closed, connected, orientable surface. A \textit{magnetic system} is a pair $(g,b)$, with $g$ a Riemannian metric on $M$ and $b$ a smooth function (referred to as the \textit{magnetic intensity}) on $M$. 
	Denote by $\Omega_g$ the area form induced by $g$ on $M$, and let $i$ be the complex structure given by the orientation of $\Omega_g$.
	More precisely, given a vector $v \in TM$, $iv$ is the rotation of $v$ by the angle $\pi/2$ according to the orientation.
	A curve $\gamma : \R \to M$ is a \textit{$(g,b)$-geodesic} if it satisfies the differential equation
	\begin{equation} \label{eqn:magnetic_defn}
		\frac{\mathrm{D} \dot{\gamma}}{\mathrm{d}t}(t) =  b(\gamma(t)) i \dot{\gamma}(t),
	\end{equation}
	where ${\rm D}/{\rm d}t$ denotes the covariant derivative along $\gamma$ induced by the Levi-Civita connection of $g$. A $(g,b)$-geodesic describes the motion of a charged particle on $(M,g)$ under the influence of a magnetic force described by $b$.
	
	Observe that
	if $b =0$, then \eqref{eqn:magnetic_defn} reduces to the standard equation for \hbox{$g$-geodesics} on $M$. Also observe that while $(g,b)$-geodesics have constant speed, they are not homogeneous if $b \neq 0$. In particular, rescaling the initial speed vector or inverting the direction of the initial velocity vector may drastically change the associated trajectory. Despite this, up to a time reparameterization, one can recover the dynamics of unit speed $(g,b)$-geodesics from the dynamics of $(g,b)$-geodesics with speed $s > 0$ by rescaling the metric $g$ by the factor $1/s^2$ and the magnetic intensity by the factor $1/s$. Without loss of generality, we restrict our perspective to unit speed solutions of \eqref{eqn:magnetic_defn}.

	The \emph{magnetic flow} associated to the pair $(g,b)$ is the flow $\varphi_{g,b}^t : S_gM \rightarrow S_gM$ given by $\phi_{g,b}^t(\gamma(0),\dot{\gamma}(0)) \coloneqq (\gamma(t), \dot{\gamma}(t))$.
	The orbits of $\phi_{g,b}^t$ have a geometric interpretation in terms of the geodesic curvature. The \emph{$g$-geodesic curvature} associated to a curve $\gamma : \R \rightarrow M$ is defined by
	\[\mathrm{k}^g(t) \coloneqq g \left( \frac{\mathrm{D} \dot{\gamma}}{\mathrm{d}t}(t),i\dot{\gamma}(t)\right).\]
	It is immediate from \eqref{eqn:magnetic_defn} that a unit speed curve $\gamma$ is a $(g,b)$-geodesic if and only if the $g$-geodesic curvature of $\gamma$ satisfies $\mathrm{k}^g(t)=b(\gamma(t)) $.

	The magnetic system $(g,b)$ is called an \textit{Anosov magnetic system} if the flow $\phi_{g,b}^t$ is Anosov. Similar to the geodesic scenario, the existence of an Anosov magnetic system on $M$ implies some restrictions on both the magnetic intensity and the surface. By \cite[Corollary C]{yumin}, the genus of $M$ must be at least two and, as pointed out in \cite[Theorem~B]{burns2002anosov}, the average of the magnetic intensity cannot be larger than the so-called \emph{Ma\~{n}\'{e} critical value}.
	
	An interesting problem is determining how to construct Anosov magnetic systems. By the structural stability of Anosov flows, observe that if $g$ is an Anosov metric, then $(g,b)$ is Anosov as long as $b$ is uniformly small. On the other hand, not every Anosov magnetic system comes from an Anosov metric: in \cite[Section~7]{burns2002anosov}, it was shown that one can start with a non-Anosov metric on a higher genus surface and place a magnetic intensity $b$ on $M$ so that $(g,b)$ is Anosov. This shows that the set of metrics for which there is a magnetic intensity $b$ so that $(g,b)$ is Anosov is larger than the set of Anosov metrics.

	
	As in the Riemannian setting, an important subset of the set of Anosov magnetic systems consists of those which are negatively curved in a magnetic sense.
	The \emph{magnetic curvature} associated to the system $(g,b)$ is given by
	\begin{equation} \label{eqn:curvature_defn} \K^{g,b} : S_gM \rightarrow \R \quad \K^{g,b}(x,v) \coloneqq K^g(x) - \mathrm{d}_xb(iv) + b^2(x), \end{equation}
	where $K^g$ is the Gaussian curvature of $(M,g)$. It was shown in \cite{wojtkowski2000magnetic} that if $\K^{g,b} < 0$, then the magnetic flow is Anosov.\footnote{See also \cite{gouda, grognet1999flots} for earlier works in this direction, and \cite[Appendix A]{IVJ} for the proof in higher dimensions.} Under additional assumptions, this result has been extended to non-positively curved magnetic flows in the recent paper \cite{yumin} of the third author in collaboration with Yumin Shen.
	%
	
	Given an Anosov magnetic system $(g,b)$, it is still true that for each non-trivial free homotopy class, there is a unique closed $(g,b)$-geodesic with unit speed \cite{contreras2000palais}. With this in mind, we define the \textit{magnetic marked length spectrum} as the function $\mathrm{MLS}(g,b):\pi_1(M) \to [0,+\infty)$ which associates to a non-trivial free homotopy class $\nu$ the length of the unique closed unit speed $(g,b)$-geodesic $\gamma_{\nu}$ in $\nu$, and zero to the trivial free homotopy class.
	\begin{rem}
		Observe that in the Riemannian case, the traces of
		$\gamma_{\nu}$ and $\gamma_{-\nu}$ always coincide.  This is not
		necessarily the case for magnetic geodesics. In particular, for
		some $\nu$, it may be that
		$\mathrm{MLS}(g,b)(\nu) \neq \mathrm{MLS}(g,b)(-\nu)$, although
		there are non-trivial examples where there is
		equality for every choice of $\nu$ (\eg Lemma~\ref{lem:area}).
	\end{rem}
	
	Our main result shows that the magnetic marked length spectrum, the area, and the cohomology class of $[b \Omega_g]$ determine the Riemannian metric and the magnetic intensity, extending the result of the third author in \cite{MR} from the deformative setting to the global setting.
	
	\begin{thm} \label{thm:main1}
		Let $M$ be a closed, connected, oriented surface. Suppose $(g_1,b_1)$ and $(g_2,b_2)$ are Anosov magnetic systems on $M$ satisfying the following conditions:
		\[ {\rm MLS}(g_1,b_1) = {\rm MLS}(g_2,b_2), \ {\rm Area}(g_1) = {\rm Area}(g_2), \ \mbox{and} \ [b_1 \Omega_{g_1}] = [b_2 \Omega_{g_2}].\]
		Then there exists a diffeomorphism $f : M \rightarrow M$ which is isotopic to the identity and which satisfies $f^*(g_2) = g_1$ and $f^*(b_2) = b_1$.
	\end{thm}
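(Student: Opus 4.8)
The argument follows the three‑step pattern used for the Riemannian case in \cite{guillarmou2023marked} — pass from the spectral hypothesis to a flow conjugacy, upgrade the conjugacy to a smooth volume‑preserving diffeomorphism, and then convert this dynamical equivalence into a geometric one — while accounting for the features specific to magnetic systems: magnetic geodesics are neither reversible nor homogeneous, the phase spaces $S_{g_1}M$ and $S_{g_2}M$ are a priori different manifolds, and the magnetic flux $\int_M b_i\Omega_{g_i}$ enters as an invariant. In particular the proof splits into first showing that the spectral hypotheses force a volume‑preserving conjugacy isotopic to the identity between the two magnetic flows, and then that any such conjugacy is induced by a diffeomorphism of $M$ identifying the two systems.

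\emph{Step 1: from the magnetic marked length spectrum to a conjugacy.} Since each $(g_i,b_i)$ is Anosov, every nontrivial free homotopy class $\nu$ carries a unique closed unit‑speed $(g_i,b_i)$‑geodesic, so $\MLS(g_1,b_1)=\MLS(g_2,b_2)$ gives a bijection between the periodic orbits of $\phi^t_{g_1,b_1}$ and $\phi^t_{g_2,b_2}$ respecting free homotopy classes and preserving periods. Lifting closed magnetic geodesics to quasi‑geodesics of the (Gromov hyperbolic) universal cover and running Gromov's argument produces a continuous orbit equivalence of the two flows, and Livshits' theorem fixes the time parametrization; one obtains a bi‑Hölder, time‑preserving conjugacy $\Phi\colon S_{g_1}M\to S_{g_2}M$. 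Identifying $S_{g_1}M$ and $S_{g_2}M$ with the circle bundle over $M$ of Euler number $\chi(M)$, $\Phi$ is homotopic, hence isotopic, to the identity. The magnetic subtlety here is that the shadowing estimates and the boundary map must be produced for the flow directly, since one cannot appeal to time‑reversal or to rescaling of velocities.

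\emph{Step 2: upgrading the conjugacy.} The magnetic flow of $(g_i,b_i)$ is the Reeb flow of a twisted contact form $\alpha_i$ on $S_{g_i}M$, obtained from the canonical contact form and a local primitive of $b_i\Omega_{g_i}$; its contact volume is globally defined, and the hypotheses $\mathrm{Area}(g_1)=\mathrm{Area}(g_2)$ and $[b_1\Omega_{g_1}]=[b_2\Omega_{g_2}]$ make the two contact volumes have the same total mass. The magnetic stretch of $g_2$ relative to $(g_1,b_1)$ equals $1$ on every invariant probability measure, since it does so on the weak-$\ast$ dense set of periodic‑orbit measures by the MLS hypothesis; applied to the Liouville measure $\mu_1$ and combined with the equality case of the stretch/pressure inequalities, this forces $\Phi_*\mu_1=\mu_2$, and the regularity theory for volume‑preserving conjugacies of three‑dimensional contact Anosov flows then makes $\Phi$ of class $C^\infty$. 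Since $d\Phi(X_1)=X_2\circ\Phi$ for the generators $X_i$ of the two flows, the one‑form $\eta:=\Phi^*\alpha_2-\alpha_1$ satisfies $\iota_{X_1}\eta=0$ and $\mathcal{L}_{X_1}\eta=0$, hence is closed; the cohomological hypothesis places $[\eta]$ in the pullback $\pi_1^*H^1(M;\R)$, which is exactly the gauge freedom in the choice of magnetic primitives, so after adjusting these primitives one may assume $\Phi^*\alpha_2=\alpha_1$ — a strict contactomorphism intertwining the two magnetic flows.

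\emph{Step 3: from the dynamical equivalence to an isometry, and the main obstacle.} It remains to show that a strict contactomorphism $\Phi$ isotopic to the identity and intertwining the magnetic flows descends to a diffeomorphism $f\colon M\to M$ with $f^*g_2=g_1$ and $f^*b_2=b_1$. The plan is to reconstruct the base geometry from the contact and dynamical data: one differentiates the $\Phi$‑conjugacy to localize the comparison of $(g_1,b_1)$ and $(g_2,b_2)$, so that the obstruction to $\Phi$ coming from a base map is carried by a divergence‑free pair — a symmetric $2$‑tensor $h$ and a $1$‑form $\beta$ on $M$ — whose magnetic X‑ray transform vanishes along every closed $(g_1,b_1)$‑geodesic. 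Solenoidal injectivity of the magnetic X‑ray transform on Anosov surfaces, the linear statement underlying the deformative rigidity of \cite{MR} (itself proved via the Pestov identity and the structure of the associated normal operator), then identifies $(h,\beta)$ with a pure gauge term $(\mathcal{L}_Vg_1,\cdot)$; integrating the flow of $V$ produces $f$, isotopic to the identity because $\Phi$ is. I expect this last step to be the main obstacle: making rigorous the passage from a flow‑level equivalence to the vanishing of a geometric X‑ray transform — for \emph{both} the metric and the magnetic intensity, in the presence of broken time‑reversal symmetry — is precisely what forces the auxiliary hypotheses on the area and on the cohomology class of $b\Omega_g$, each of which is necessary for the conclusion and serves to eliminate the ambiguity these features introduce.
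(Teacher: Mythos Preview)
Your Steps~1 and~2 are broadly in the right spirit and close to what the paper does, although some details are off. The paper obtains smoothness of the conjugacy directly from the Gogolev--Rodriguez~Hertz rigidity theorem for $3$-dimensional Anosov flows, \emph{before} any volume-preservation is established; the area hypothesis then enters only through the elementary observation (Lemma~\ref{lem:reduction}) that the Jacobian $J_h$ of $h$ with respect to the Liouville volumes is flow-invariant, hence constant by transitivity, and that constant is the ratio of the areas. Your appeal to ``stretch/pressure inequalities'' and to a Reeb/contact description of the magnetic flow is more elaborate than what is needed, and the claim that $X_{g,b}$ is the Reeb field of a twisted contact form obtained from a local primitive of $b\Omega_g$ is not quite correct as stated: the primitive the paper uses, $\tau_{g,b}=-\alpha^g-c_{g,b}\psi^g+\pi_g^*(\theta)$, satisfies $d\tau_{g,b}=\omega_{g,b}$ but $\tau_{g,b}(X_{g,b})$ is not identically~$1$.

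The genuine gap is in Step~3, and you are right to flag it as the main obstacle. What you sketch --- linearize the conjugacy, extract a divergence-free pair $(h,\beta)$ with vanishing magnetic X-ray transform, and invoke solenoidal injectivity --- is precisely the \emph{deformative} mechanism of \cite{MR}, and the passage ``differentiates the $\Phi$-conjugacy to localize the comparison'' does not explain how to produce such a pair from a conjugacy between two systems that are not a priori close. The paper does \emph{not} attempt to globalize the X-ray argument. Instead it imports Echevarr\'ia Cuesta's theorem \cite[Theorem~1.1]{javier}, which from a smooth volume-preserving conjugacy isotopic to the identity produces a diffeomorphism of $M$ after which $g_2=\rho g_1$ is \emph{conformal} to $g_1$. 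The remaining work (Theorem~\ref{thm:conformal}) is then a Katok-style argument: Jensen's inequality applied to $\rho^{1/2}$ against the area form, Birkhoff's ergodic theorem along a generic orbit, density of homologically trivial periodic orbits, and the magnetic-length minimization of Lemma~\ref{lem:magnetic_length} combine to force $\rho\equiv 1$. The cohomology hypothesis $[b_1\Omega_{g_1}]=[b_2\Omega_{g_2}]$ is used not as a gauge-fixing of contact primitives as you suggest, but concretely in Lemma~\ref{lem:technical} to rewrite the flux term $\int_{\Sigma_{g_2,b_2}(\gamma_1)}b_2\Omega_{g_2}$ as $\int_{\gamma_1}\zeta$ for a fixed $1$-form $\zeta$, so that the ergodic averaging can control it. Once $\rho\equiv 1$, a second application of magnetic-length minimization together with the non-negative Livshits theorem and the longitudinal KAM result of \cite{paternain2005longitudinal} gives $b_1=b_2$. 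None of this uses tensor tomography.
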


	It follows from Livshits' theorem \cite{livshits1972cohomology} along with a theorem of Ghys \cite{ghys1984flots} that equality of the marked length spectrum implies that there is a {$C^0$-conjugacy} between the corresponding magnetic flows which is isotopic to the identity. In the case where the magnetic intensities are trivial, one can use the work of Feldman-Ornstein \cite{feldman} to deduce that the conjugacy is smooth. The recent rigidity result by Gogolev-Rodriguez Hertz in \cite[Theorem 1.1]{gogolev2022smooth} extends this work to a larger class of Anosov flows on three manifolds, and, in particular, shows that  
	equality of the magnetic marked length spectrum
	implies that the flows $\phi_{g_1,b_1}^t$ and $\phi_{g_2,b_2}^t$ are $C^\infty$-conjugate by some $h : S_{g_1}M \rightarrow S_{g_2}M$ which is isotopic to the identity (cf.\ \cite{wilkinson}).
	
	Let $\alpha^g$ be the contact form associated to the geodesic flow $\varphi_g^t$. Recall that the \emph{Liouville volume form} on $S_gM$ associated to the metric $g$ is the top-degree form $\mu^g \coloneqq - \alpha^g \wedge {\mathrm d}\alpha^g$. As we will see in Lemma ~\ref{lem:reduction}, one can use the area assumption to show that the conjugacy described above must pullback the Liouville volume form for $g_2$ to the Liouville volume form of $g_1$, and thus we can reduce Theorem ~\ref{thm:main1} to the following.

	\begin{thm} \label{thm:main2}
		Let $M$ be a closed, connected, oriented surface. Suppose $(g_1,b_1)$ and $(g_2,b_2)$ are Anosov magnetic systems on $M$ such that the corresponding magnetic flows are smoothly conjugate by a conjugacy $h$ which is isotopic to the identity, the conjugacy $h$ preserves the corresponding Liouville volume forms, and $[b_1 \Omega_{g_1}] = [b_2 \Omega_{g_2}]$.
		Then there is a diffeomorphism $f : M \rightarrow M$ which is isotopic to the identity and which satisfies $f^*(g_2) = g_1$ and $f^*(b_2) = b_1$.
	\end{thm}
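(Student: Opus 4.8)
The plan is to follow the scheme of Guillarmou--Lefeuvre--Paternain, inserting the changes forced by the fact that the magnetic flow is \emph{not} the Reeb flow of the canonical contact form $\alpha^g$ on $S_gM$. Write $X_i$ for the generator of $\phi^t_{g_i,b_i}$, so $X_i=X^{g_i}+b_iV_i$ with $V_i$ the vertical vector field on $S_{g_i}M$, and let $(\alpha^{g_i},\beta^{g_i},\psi^{g_i})$ be the canonical coframe on $S_{g_i}M$ dual to $(X^{g_i},H_i,V_i)$; then $\alpha^{g_i}(X_i)=1$, $\iota_{X_i}\,{\rm d}\alpha^{g_i}=b_i\beta^{g_i}$, and $\pi_i^*\Omega_{g_i}=\alpha^{g_i}\wedge\beta^{g_i}$. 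Set $\theta:=h^*\alpha^{g_2}-\alpha^{g_1}$, a $1$-form on $S_{g_1}M$. Since $h$ is a time-preserving conjugacy, ${\rm d}h(X_1)=X_2$, whence $\iota_{X_1}\theta=\alpha^{g_2}(X_2)-1=0$; and since $h$ carries each closed $X_1$-orbit $\gamma$ to the closed $X_2$-orbit $h(\gamma)$ of the same period, $\int_\gamma\theta=\int_{h(\gamma)}\alpha^{g_2}-\int_\gamma\alpha^{g_1}=0$ for every closed orbit. Finally, the Liouville condition $h^*(\alpha^{g_2}\wedge{\rm d}\alpha^{g_2})=\alpha^{g_1}\wedge{\rm d}\alpha^{g_1}$ expands to $\alpha^{g_1}\wedge{\rm d}\theta+\theta\wedge{\rm d}\alpha^{g_1}+\theta\wedge{\rm d}\theta=0$; contracting this three-form identity with $X_1$ and simplifying with the relations above gives the key identity
\[ {\rm d}\theta=(\pi_2\circ h)^*(b_2\Omega_{g_2})-\pi_1^*(b_1\Omega_{g_1}). \]

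Granting for the moment that the right-hand side of this identity vanishes (possibly only after replacing $h$ by a time shift; see below), the rest goes through quickly. Then $\theta$ is a closed $1$-form with $\int_\gamma\theta=0$ on every closed orbit of the transitive Anosov flow $X_1$; since such orbits span $H_1(S_{g_1}M;\R)$, $\theta$ is exact, $\theta={\rm d}F$, and $X_1F=\iota_{X_1}\theta=0$ forces $F$ constant, so $\theta=0$: $h^*\alpha^{g_2}=\alpha^{g_1}$ and $h^*{\rm d}\alpha^{g_2}={\rm d}\alpha^{g_1}$. Using the structure equations on $S_{g_i}M$ together with ${\rm d}h(X_1)=X_2$ and the vanishing of the right-hand side, one then checks that ${\rm d}h$ sends the vertical line field $\ker{\rm d}\pi_1$ to $\ker{\rm d}\pi_2$ and in fact $h_*V_1=V_2$ and $h^*\beta^{g_2}=\beta^{g_1}$; hence $h$ is fibre-preserving and descends to a diffeomorphism $f\colon M\to M$ isotopic to the identity with $\pi_2\circ h=f\circ\pi_1$. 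Evaluating $h^*\alpha^{g_2}=\alpha^{g_1}$ and $h^*\beta^{g_2}=\beta^{g_1}$ at $(x,v)$, with $h(x,v)=(f(x),w)$, gives $g_1(u,v)=g_2({\rm d}f_xu,w)$ and $g_1(u,i^{g_1}v)=g_2({\rm d}f_xu,i^{g_2}w)$ for all $u\in T_xM$; letting $(x,v)$ range over $S_{g_1}M$ and using $|v|_{g_1}=|w|_{g_2}=1$, a short linear-algebra argument shows that ${\rm d}f_x$ is a $g_1$--$g_2$ isometry, i.e.\ $f^*g_2=g_1$ and moreover $w={\rm d}f_xv$; then ${\rm d}h(X_1)=X_2$ with $h_*V_1=V_2$ gives $b_1=b_2\circ f$, i.e.\ $f^*b_2=b_1$.

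Thus everything reduces to showing that ${\rm d}\theta$ — or rather ${\rm d}\theta$ for a suitable time shift of $h$ — vanishes, and this is where I expect the real difficulty to lie, and where the hypothesis $[b_1\Omega_{g_1}]=[b_2\Omega_{g_2}]$ is used. In the Riemannian case $b_1=b_2=0$, the right-hand side of the key identity is identically zero, and this step is merely the observation that the Liouville condition forces ${\rm Area}(g_1)={\rm Area}(g_2)$; the magnetic version is the analogue of this ``$c=1$'' step but is considerably more delicate. A first point is that the centralizer of $\phi^t_{g_1,b_1}$ consists of time shifts only, so one should not expect ${\rm d}\theta=0$ for the given $h$ but only after replacing $h$ by $h\circ\phi^{-s}_{g_1,b_1}$ for an appropriate $s\in\R$; this modification is harmless in the reduction above, being again a conjugacy isotopic to the identity that preserves ${\rm d}h(X_1)$ and $h^*\mu^{g_2}$. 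To identify this $s$ and prove the vanishing, I would decompose $\theta$ and the key identity into Fourier modes along the circle fibres of $\pi_1$ — the fibrewise decomposition $L^2(S_{g_1}M)=\bigoplus_k\Omega_k$ adapted to the magnetic flow, as in \cite{MR} — use the transport equation $\mathcal L_{X_1}\theta=\iota_{X_1}{\rm d}\theta$ to relate these modes, and invoke the solenoidal injectivity of the magnetic X-ray transform on functions and $1$-forms (the linear rigidity input of \cite{MR}) to conclude that the degree $\le 1$ part of ${\rm d}\theta$, which determines the whole $2$-form, must vanish. The cohomological hypothesis enters precisely to equate the two magnetic fluxes $\int_M b_1\Omega_{g_1}=\int_M b_2\Omega_{g_2}$, and hence the corresponding obstructions on $S_{g_1}M$ and $S_{g_2}M$.
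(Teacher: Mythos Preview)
Your route is genuinely different from the paper's, but the step you identify as ``the real difficulty'' is the whole problem, and your plan for it is not a proof. The paper never attempts to show $h^*\alpha^{g_2}=\alpha^{g_1}$ directly. It first invokes Echevarr\'{i}a Cuesta's theorem \cite[Theorem~1.1]{javier}, which already places $g_2$ in the conformal class of $g_1$ (up to a diffeomorphism isotopic to the identity); the remaining work is Theorem~\ref{thm:conformal}, proved by a magnetic adaptation of Katok's conformal argument. Jensen's inequality, Birkhoff ergodicity, and density of homologically trivial orbits produce a closed $(g_1,b_1)$-geodesic along which the magnetic-length minimization (Lemma~\ref{lem:magnetic_length}) yields a contradiction unless $\rho\equiv 1$. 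The cohomology hypothesis enters through Lemma~\ref{lem:technical}, which rewrites the relevant flux as $\int_{\gamma_1}\zeta$ for a $1$-form on $M$ with zero Liouville average; the equality $b_1=b_2$ is then deduced from the non-positive Livshits theorem and \cite{paternain2005longitudinal}.

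Your key identity ${\rm d}\theta=(\pi_2\circ h)^*(b_2\Omega_{g_2})-\pi_1^*(b_1\Omega_{g_1})$ is correct (it follows at once from $h^*\omega_{g_2,b_2}=\omega_{g_1,b_1}$, cf.~\eqref{eqn:symplectic_volume}), and so is your observation that a time shift is needed: if $f$ is the sought isometry then $h={\rm d}f\circ\phi^s_{g_1,b_1}$ for some $s$, and since $\mathcal L_{X_1}\alpha^{g_1}=b_1\beta^{g_1}$ one has $\theta\ne 0$ whenever $s\ne 0$ and $b_1\not\equiv 0$. But the right-hand side of your identity is \emph{nonlinear} in $h$ through $\pi_2\circ h$, so the fibrewise Fourier decomposition on $S_{g_1}M$ does not apply to it cleanly, and the X-ray injectivity results of \cite{MR} are linearized tools that yield only deformation rigidity; bootstrapping them to a global statement is precisely what the present paper circumvents by passing through \cite{javier} and Katok. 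Your post-vanishing steps (in particular $h_*V_1=V_2$) would also require more justification in the magnetic case than ``one then checks'' suggests, but that is secondary: the genuine gap is that you have no mechanism to force ${\rm d}\theta=0$.
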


	The proof of Theorem ~\ref{thm:main2} is as follows. First, we use a recent rigidity result by Echevarr\'{i}a Cuesta in \cite[Theorem 1.1]{javier} to reduce the problem to the setting where $g_1$ and $g_2$ are conformally related. Once in this setting, we adapt the conformal argument of Katok in \cite{katok} in the magnetic setting. In the case where $[b_1 \Omega_{g_1}] = [b_2 \Omega_{g_2}] = 0$, the argument follows by using the fact that magnetic geodesics are minimizers for a functional which we call the \emph{magnetic length}. In the non-exact setting, we use the fact that the flow is homologically full to work on homologically trivial orbits, and then take advantage of the fact that $b_1 \Omega_{g_1}$ and $b_2 \Omega_{g_2}$ are exact when pulled back to the unit tangent bundle.
	
	We note that the assumptions on the area and the cohomology class in Theorem ~\ref{thm:main1} are necessary via two examples in Section ~\ref{section:examples}, which can also be found in \cite{paternain2006magnetic}. For the area assumption, we show in Lemma ~\ref{lem:area} that for a hyperbolic metric $g$ and a constant $|b| < 1$, we have $\MLS(g,b) = (1-b^2)^{-1/2} \MLS(g)$. By considering the homothetic metric $g_b \coloneqq (1-b^2)^{-1}$, we see that $\MLS(g, b) = \MLS(g_b)$ while $\text{Area}(g) \neq \text{Area}(g_b)$. It follows that the marked length spectrum assumption alone is not enough to guarantee that the underlying metrics are isometric.
	
	We show in Lemma ~\ref{lem:homology_class} that $[b_1 \Omega_{g_1}] = \pm [b_2 \Omega_{g_2}]$ (see also \cite[Lemma 4.1]{paternain2006magnetic} and \cite[Proposition 4.5]{javier}). Lemma ~\ref{lem:intensity} shows that this is actually realizable, in the sense that for a hyperbolic metric $g$ and a constant $|b| < 1$, the magnetic systems $(g,b)$ and $(g,-b)$ are smoothly conjugate by a conjugacy which is isotopic to the identity. In particular, this shows that the marked length spectrum assumption alone is not sufficient to guarantee that the cohomology class of $b_1 \Omega_{g_1}$ and $b_2 \Omega_{g_2}$ are the same.
	
	We now give some applications of Theorem ~\ref{thm:main1}. First, we observe that Theorem ~\ref{thm:main1} extends \cite[Theorem 1.1]{guillarmou2019marked} to the case of \emph{exact magnetic systems}, i.e., magnetic systems where $[b \Omega_g] = 0$. Using Lemma \ref{lem:homology_class}, it can be shown that if $(g_1, b_1)$ and $(g_2, b_2)$ are exact Anosov magnetic systems which are smoothly conjugate, then $\text{Area}(g_1) = \text{Area}(g_2)$. In fact, this also extends to $(g_1, b_1 + \delta)$ and $(g_2, b_2 + \delta)$, provided that $\delta \in \R$ is sufficiently small so that the marked length spectrum still makes sense. With this in mind, the following is immediate.
	
	\begin{cor}\label{cor:main}
		Let $M$ be a closed, connected, orientable surface, and let $(g_1,b_1)$ and $(g_2,b_2)$ be exact Anosov magnetic systems on $M$. If $\delta \in \R$ is sufficiently small so that $(g_1, b_1 + \delta)$ and $(g_2, b_2 + \delta)$ are still Anosov and ${\rm MLS}(g_1, b_1 + \delta) = {\rm MLS}(g_2, b_2 + \delta)$, then there exists a diffeomorphism $f : M \rightarrow M$ which is isotopic to the identity and which satisfies $f^*(g_2) = g_1$ and $b_2 \circ f = b_1$.
	\end{cor}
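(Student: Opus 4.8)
The plan is to apply Theorem~\ref{thm:main1} directly to the perturbed pair $(g_1, b_1+\delta)$ and $(g_2, b_2+\delta)$. Since equality of marked length spectra for this pair is assumed, the only points to verify are the equality of areas and the equality of the magnetic cohomology classes for the perturbed systems, after which one strips off the constant $\delta$ at the end.

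First I would recall, as in the discussion following Theorem~\ref{thm:main1}, that ${\MLS}(g_1, b_1+\delta) = {\MLS}(g_2, b_2+\delta)$ produces (via Livshits' theorem, Ghys' theorem, and the Gogolev-Rodriguez Hertz rigidity result) a smooth conjugacy, isotopic to the identity, between the magnetic flows $\phi_{g_1, b_1+\delta}^t$ and $\phi_{g_2, b_2+\delta}^t$. Feeding this conjugacy into Lemma~\ref{lem:homology_class} gives $[(b_1+\delta)\Omega_{g_1}] = \pm [(b_2+\delta)\Omega_{g_2}]$ in $H^2(M;\R)$. Because each $(g_i, b_i)$ is exact, $[b_i\Omega_{g_i}] = 0$, so $[(b_i+\delta)\Omega_{g_i}] = \delta[\Omega_{g_i}]$, which under the identification $H^2(M;\R)\cong\R$ by integration corresponds to the number $\delta\,{\rm Area}(g_i)$, a fixed multiple of $\delta$ by the positive quantity ${\rm Area}(g_i)$. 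Hence $\delta\,{\rm Area}(g_1) = \pm\,\delta\,{\rm Area}(g_2)$. If $\delta \neq 0$, positivity of the areas forces the sign to be $+$, yielding ${\rm Area}(g_1) = {\rm Area}(g_2)$; if $\delta = 0$ this same equality is precisely the fact, recorded just before the corollary, that exact Anosov magnetic systems which are smoothly conjugate have equal area. In either case ${\rm Area}(g_1) = {\rm Area}(g_2)$, and then the classes $\delta[\Omega_{g_1}]$ and $\delta[\Omega_{g_2}]$ agree outright, i.e.\ $[(b_1+\delta)\Omega_{g_1}] = [(b_2+\delta)\Omega_{g_2}]$.

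Now Theorem~\ref{thm:main1}, applied to $(g_1, b_1+\delta)$ and $(g_2, b_2+\delta)$ (whose marked length spectra agree by hypothesis, whose areas agree by the previous step, and whose magnetic forms lie in the same cohomology class), yields a diffeomorphism $f : M \to M$ isotopic to the identity with $f^*g_2 = g_1$ and $f^*(b_2+\delta) = b_1 + \delta$. As $\delta$ is a constant, $f^*(b_2+\delta) = b_2\circ f + \delta$, whence $b_2\circ f = b_1$, completing the argument. There is essentially no obstacle here: all the content sits in Theorem~\ref{thm:main1}, Lemma~\ref{lem:homology_class}, and the auxiliary area statement preceding the corollary. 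The only mild subtlety is the sign in Lemma~\ref{lem:homology_class}, which one upgrades to an honest equality of cohomology classes using $\delta \neq 0$ together with positivity of areas, dispatching the degenerate case $\delta = 0$ separately by exactness.
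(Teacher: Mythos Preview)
Your proposal is correct and follows essentially the same route the paper sketches in the paragraph preceding the corollary: obtain a smooth conjugacy from the marked length spectrum hypothesis, feed it into Lemma~\ref{lem:homology_class} to extract equality of areas (and hence of the perturbed cohomology classes), then invoke Theorem~\ref{thm:main1} and subtract $\delta$. One minor remark: rather than routing through the $\pm$ sign and splitting into cases $\delta=0$ and $\delta\neq 0$, you can plug $c_{g_i,b_i+\delta}=\delta\,{\rm Area}(g_i)/(2\pi\chi(M))$ directly into the identity of Lemma~\ref{lem:homology_class}; the quadratic terms on each side are then equal and cancel, yielding ${\rm Area}(g_1)={\rm Area}(g_2)$ uniformly in $\delta$.
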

	
	Notice that there is no assumption on the metric in Corollary ~\ref{cor:main} -- as long as the exact magnetic systems $(g_i, b_i + \delta)$ share the same marked length spectrum, then the underlying metrics are isometric. As previously mentioned, notice that $\MLS(g,b)$ encodes the length of the unique closed curve $\gamma$ in each non-trivial free homotopy class of $M$ with prescribed $g$-geodesic curvature equal to $b \circ \gamma$. The above result can then be interpreted as saying that the unit speed curves with $g$-geodesic curvature equal to $b$ completely determine the underlying metric, provided the set of these curves is ``dynamically rich.'' In Section ~\ref{section:examples}, we modify the example in \cite[Section 7]{burns2002anosov} to construct an example of a metric $g$ which is not Anosov, but for which there exists an magnetic intensity $b$ so that $[b \Omega_g] = 0$ and the magnetic system $(g, b)$ is Anosov. Thus, Corollary ~\ref{cor:main} along with this example shows that marked length spectrum rigidity can be extended beyond the setting of Anosov metrics and non-positively curved metrics.

	Next, we see how the magnetic marked length spectra uniquely characterizes geodesic flows, provided the area is fixed.
	Using the arguments in \cite[Th\'{e}or\`{e}me 7.3]{grognet1999flots}, one can show that if $(g_1,b)$ and $g_2$ both satisfy $\K^{g_1,b} ,K^{g_2}<0$, $\text{MLS}(g_1,b) = \text{MLS}(g_2)$, and $\text{Area}(g_1) = \text{Area}(g_2)$, then $b \equiv 0$ and $g_1$ and $g_2$ are isometric. One can also deduce from Lemma \ref{lem:homology_class} that if $\MLS(g_1,b) = \MLS(g_2)$ and $\text{Area}(g_1) = \text{Area}(g_2)$, then $[b \Omega_g] = 0$.
	This shows that Corollary ~\ref{cor:main} extends \cite[Th\'{e}or\`{e}me 7.3]{grognet1999flots} from negatively curved magnetic systems to Anosov magnetic systems.
	
	\begin{cor} \label{cor:main2}
		Let $(g_1,b)$ be an Anosov magnetic system. If $g_2$ is an Anosov metric such that ${\rm MLS}(g_1,b) = {\rm MLS}(g_2)$ and ${\rm Area}(g_1) = {\rm Area}(g_2)$, then $b \equiv 0$ and there is an isometry between $g_1$ and $g_2$ which is isotopic to the identity.
	\end{cor}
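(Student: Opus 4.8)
The plan is to obtain Corollary~\ref{cor:main2} as a direct consequence of Theorem~\ref{thm:main1}, by treating the Anosov metric $g_2$ as the exact magnetic system $(g_2,0)$. This is a legitimate input to Theorem~\ref{thm:main1}, since the magnetic flow of $(g_2,0)$ is literally the geodesic flow of $g_2$, hence Anosov. Because the $(g_2,0)$-geodesics are exactly the $g_2$-geodesics, one has $\MLS(g_2)=\MLS(g_2,0)$, so the hypotheses become $\MLS(g_1,b)=\MLS(g_2,0)$ and $\mathrm{Area}(g_1)=\mathrm{Area}(g_2)$. The weighted form of the second system is $0\cdot\Omega_{g_2}=0$, so $[0\cdot\Omega_{g_2}]=0$; thus the only hypothesis of Theorem~\ref{thm:main1} not immediately available is the cohomological one, namely that $(g_1,b)$ is exact, i.e.\ $[b\,\Omega_{g_1}]=0$.

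To check exactness I would argue as in the discussion preceding the corollary. Equality of the magnetic marked length spectra, via Livshits' theorem together with a theorem of Ghys, produces a $C^0$-conjugacy between $\phi_{g_1,b}^t$ and $\phi_{g_2,0}^t$ isotopic to the identity; since the second intensity vanishes, Feldman--Ornstein (or, more generally, Gogolev--Rodriguez Hertz) upgrades it to a smooth conjugacy isotopic to the identity. Lemma~\ref{lem:homology_class} then applies and gives $[b\,\Omega_{g_1}]=\pm[0\cdot\Omega_{g_2}]=0$, the sign ambiguity being harmless because the right-hand side is zero. (Note that the area hypothesis is not what forces exactness here; it enters Theorem~\ref{thm:main1} to exclude homothetic rescalings, as the pair $(g,b)$ versus $(g_b,0)$ in the discussion after Theorem~\ref{thm:main1} illustrates.)

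With all three hypotheses of Theorem~\ref{thm:main1} verified for $(g_1,b)$ and $(g_2,0)$, we obtain a diffeomorphism $f:M\to M$ isotopic to the identity with $f^*(g_2)=g_1$ and $f^*(0)=b$. The second identity reads $b\equiv 0$, and the first then says that $f$ is an isometry between $(M,g_1)$ and $(M,g_2)$ isotopic to the identity, which is the assertion. I do not expect a genuine obstacle: the statement is essentially a specialization of Theorem~\ref{thm:main1} once one observes that the target magnetic system has trivial cohomology class. The only point requiring a little care is confirming the hypotheses of Lemma~\ref{lem:homology_class} --- that the two magnetic flows are smoothly conjugate through a map isotopic to the identity --- but this is exactly the Livshits--Ghys--Feldman--Ornstein package already used elsewhere in the paper.
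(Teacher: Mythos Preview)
Your argument is correct and is essentially the paper's own route: deduce $[b\,\Omega_{g_1}]=0$ from Lemma~\ref{lem:homology_class} and then invoke the main theorem (the paper phrases the last step as an appeal to Corollary~\ref{cor:main}, which amounts to the same thing). One correction to your parenthetical remark: the area equality \emph{is} precisely what forces exactness here---setting $c_{g_2,0}=0$ and $A_1=A_2$ in the identity of Lemma~\ref{lem:homology_class} yields $c_{g_1,b}=0$, whereas the example $(g,b)$ versus $(g_b,0)$ of Lemma~\ref{lem:area} (which you yourself cite) shows that without equal areas one can have $[b\,\Omega_g]\neq 0$.
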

	
	Finally, we introduce the magnetic marked length spectrum rigidity problem in higher dimensions; for more details on this setting, we direct this reader to \cite{assenza2023magnetic, IVJ}. Let $M$ be a closed, connected manifold of arbitrary dimension. A \emph{magnetic system} is a pair $(g,\sigma)$, where $g$ is a Riemannian metric and $\sigma$ is a closed $2$-form (referred to as the \emph{magnetic form}). Note that the magnetic form generalizes the magnetic intensity $b$ introduced for surfaces: if $\sigma$ is a closed $2$-form on a surface, then the magnetic intensity is precisely the unique smooth function $b$ on $M$ for which $\sigma = b \Omega_g$. A \emph{$(g,\sigma)$-geodesic} is a curve $\gamma : \R \to M$ which satisfies the equation
	\begin{equation}\label{sol}
		\frac{{\rm D} \dot{\gamma}}{{\rm d}t}(t) =  Y( \dot{\gamma}(t)),
	\end{equation}
	where $Y : TM \to TM$ is the \textit{Lorentz operator} defined by the relation $g( Y(v),w) = \sigma(v,w)$ for all $v,w\in TM.$
	As in the surface case, solutions of \eqref{sol} have constant speed, and the \emph{magnetic flow} $\phi_{g,\sigma}^t : S_gM \rightarrow S_gM$ associated to $(g,\sigma)$ is the flow on $S_gM$ induced by \eqref{sol}. A pair $(g,\sigma)$ is \emph{Anosov} if the flow $\phi_{g,\sigma}^t$ is Anosov, and as before the magnetic marked length spectrum naturally generalies to Anosov pairs $(g, \sigma)$. 
	With this in mind, a natural question is the following.
	
	\begin{question}
		Let $(g_1,\sigma_1)$ and $(g_2,\sigma_2)$ be two Anosov magnetic systems on a closed, connected, oriented manifold $M$. If ${\rm MLS}(g_1,\sigma_1) = {\rm MLS}(g_2,\sigma_2)$, $\mathrm{Vol}(g_1)=\mathrm{Vol}(g_2)$, and $[\sigma_1] = [\sigma_2]$, is it true that $g_1$ and $g_2$ must be isometric?
	\end{question}
	
	As evidence towards a positive answer for this question, suppose that $g_1$ and $g_2$ are two Anosov metrics with non-positive sectional curvature. If $(g_1, \sigma_1)$ is an Anosov magnetic system satisfying $\MLS(g_1, \sigma_1) = \MLS(g_2)$ and $\text{Vol}(g_1) = \text{Vol}(g_2)$, then it follows from the fact that geodesics are the unique length minimizers in their free homotopy class that $\MLS(g_1) \leq \MLS(g_2)$. As long as the metrics are sufficiently close according to \cite[Theorem 2]{guillarmou2019marked}, we have that $g_1$ and $g_2$ are isometric and $\sigma_1 \equiv 0$.\footnote{Note that one can also use this argument along with \cite[Theorem 1.1]{crokedairbekov} to deduce Corollary ~\ref{cor:main2} in the case where $g_1$ and $g_2$ are negatively curved metrics.}
	
	\subsection*{Acknowledgments}
	The authors would like to thank Andrey Gogolev, Javier Echevarr\'{i}a Cuesta, Gabriel Paternain, and Gabrielle Benedetti for many useful discussions throughout this project.
	
	\section{Preliminaries} \label{sec:prelim}
	
	Throughout, let $M$ be a closed, connected, oriented surface, let $(g,b)$ be a magnetic system on $M$, and let $\pi_g : S_gM \rightarrow M$ be the footprint map for the unit tangent bundle. The \emph{geodesic vector field} $X^{g}$ is the infinitesimal generator of the geodesic flow $\varphi_g^t$ on $S_{g}M$. Since $M$ is oriented, we may consider the \emph{rotation flow} $\rho^t_g : S_gM \rightarrow S_gM$, given by $\rho^t_g(x,v) \coloneqq (x, e^{it}v)$. Its infinitesimal generator is the \emph{vertical vector field}, which we denote by $V^g$. Finally, the \emph{horizontal vector field} on $S_gM$ is the vector field given by $H^g \coloneqq [V^g, X^g]$. Observe that the vector fields $X^g,H^g,$ and $V^g$ form a global frame on $S_gM$, which we refer to as \emph{Cartan's moving frame}. They satisfy the relations
	\begin{equation} \label{eqn:cartan1}
		[V^g,H^g] = -X^g, \quad [V^g,X^g] = H^g, \quad [X^g,H^g] = \pi_g^*(K^g) V^g,
	\end{equation}
	where $K^g$ is the Gaussian curvature on $M$. Dual to these vector fields are the $1$-forms $\alpha^g, \beta^g,$ and $\psi^g$ on $S_gM$, which we refer to as \emph{Cartan's moving coframe}. Using \eqref{eqn:cartan1}, one can show that they satisfy the relations
	\begin{equation} \label{eqn:cartan2}
		{\mathrm d}\alpha^g = \psi^g \wedge \beta^g, \quad {\mathrm d}\beta^g = -\psi^g \wedge \alpha^g, \quad {\mathrm d}\psi^g = -\pi_g^*(K^g) \alpha^g \wedge \beta^g.
	\end{equation}
	Notice that $\pi_g^*(\Omega_g) = \alpha^g \wedge \beta^g$, hence we can rewrite the last relation as
	\begin{equation} \label{eqn:psi}
		{\mathrm d}\psi^g = -\pi_g^*(K^g \Omega_g).
	\end{equation}
	Moreover, the Liouville volume form $\mu^g$ satisfies $\mu^g = \alpha^g \wedge \beta^g \wedge \psi^g$.
	
	Next, observe that the flow $\varphi_{g,b}^t$ is a Hamiltonian flow, induced by the function \hbox{$H : TM \rightarrow \R$} given by $H(x,v) \coloneqq \frac{1}{2} g_x(v,v)$ and the symplectic form given by
	\begin{equation} \label{eqn:symplectic}
		\omega_{g,b} \coloneqq - {\mathrm d}\alpha_g + \pi_g^*(b \Omega_g).
	\end{equation}
	With this, we now have the tools to prove the following.
	
	\begin{lem} \label{lem:reduction}
		Let $(g_1, b_1)$ and $(g_2, b_2)$ be Anosov magnetic systems
		on $M$, smoothly conjugate via $h : S_{g_1}M \rightarrow
		S_{g_2}M$; then:
		\begin{align*}
			h^*(\mu^{g_2}) = \frac{{\rm Area}(g_2)}{{\rm Area}(g_{1})}\mu^{g_1}.
		\end{align*}
	\end{lem}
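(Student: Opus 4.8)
The plan is to observe that the Liouville volume form $\mu^g$ is, up to a multiplicative constant, the unique smooth invariant volume of the magnetic flow $\phi_{g,b}^t$, to transport this uniqueness across the conjugacy $h$, and then to fix the constant by comparing total volumes.

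First I would record that $\phi_{g,b}^t$ preserves $\mu^g$. Its generator is the magnetic vector field $X^{g,b} = X^g + \pi_g^*(b)\,V^g$: indeed one checks that $X^g + \pi_g^*(b)V^g$ spans the kernel of $\omega_{g,b}|_{S_gM}$ and satisfies $\alpha^g(X^g + \pi_g^*(b)V^g) = 1$, which is the unit-speed normalization. A short computation with Cartan's moving coframe then gives
\[
\iota_{X^{g,b}}\,\mu^g \;=\; \beta^g\wedge\psi^g + \pi_g^*(b\,\Omega_g),
\]
and by \eqref{eqn:cartan2}, \eqref{eqn:psi}, and $d\Omega_g = 0$ the right-hand side is closed (note that $db\wedge\Omega_g$ is a $3$-form on the surface $M$, hence vanishes). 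Since $\mu^g$ is top-degree, $\mathcal{L}_{X^{g,b}}\mu^g = d\,\iota_{X^{g,b}}\mu^g = 0$, so $\phi_{g,b}^t$ preserves $\mu^g$; equivalently, $\mu^g$ is the Liouville measure of the twisted symplectic form \eqref{eqn:symplectic} restricted to the energy level $S_gM$.

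Next, since $h$ conjugates $\phi_{g_1,b_1}^t$ to $\phi_{g_2,b_2}^t$, the form $h^*\mu^{g_2}$ is a nowhere-vanishing smooth $3$-form on $S_{g_1}M$ invariant under $\phi_{g_1,b_1}^t$. Writing $h^*\mu^{g_2} = \rho\,\mu^{g_1}$ with $\rho\in C^\infty(S_{g_1}M)$ nowhere zero, and using invariance of $\mu^{g_1}$, one gets $(X^{g_1,b_1}\rho)\,\mu^{g_1} = \mathcal{L}_{X^{g_1,b_1}}(\rho\,\mu^{g_1}) = 0$, so $\rho$ is constant along the orbits of $\phi_{g_1,b_1}^t$. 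A volume-preserving Anosov flow on a closed manifold is transitive, hence $\rho$ equals a nonzero constant $c$. To identify $c$, note that $\mu^g = \pi_g^*(\Omega_g)\wedge\psi^g$ and that $\psi^g$ restricts to the angular form on each fiber of the circle bundle $\pi_g:S_gM\to M$, so fiber integration gives $\int_{S_gM}\mu^g = 2\pi\,{\rm Area}(g)$ for the orientation on $S_gM$ making $\mu^g$ positive. Since $\rho = c > 0$, the diffeomorphism $h$ preserves these orientations (this is also immediate when $h$ is isotopic to the identity), hence
\[
c\cdot 2\pi\,{\rm Area}(g_1) \;=\; \int_{S_{g_1}M} h^*\mu^{g_2} \;=\; \int_{S_{g_2}M}\mu^{g_2} \;=\; 2\pi\,{\rm Area}(g_2),
\]
so $c = {\rm Area}(g_2)/{\rm Area}(g_1)$, as claimed.

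The step requiring the most care is the transitivity argument: it uses crucially that the magnetic flow genuinely preserves a smooth positive volume (the content of the second paragraph), which is what forces an orbit-constant continuous function to be globally constant. The explicit formula for $X^{g,b}$, the closedness computation, and the orientation bookkeeping in the final integration are all routine.
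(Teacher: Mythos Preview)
Your proof is correct and follows essentially the same route as the paper: write $h^*\mu^{g_2} = \rho\,\mu^{g_1}$, use invariance of the Liouville form under the magnetic flow to see that $\rho$ is orbit-constant, invoke transitivity of the Anosov flow to make $\rho$ globally constant, and then pin down the constant by comparing total volumes via $\int_{S_gM}\mu^g = 2\pi\,\mathrm{Area}(g)$. The only difference is cosmetic: the paper appeals directly to the Hamiltonian nature of the flow for invariance of $\mu^g$, whereas you supply the explicit Cartan-frame computation of $\iota_{X^{g,b}}\mu^g$ and its closedness.
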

	
	\begin{proof}
		Notice that there exists $J_h \in C^\infty(S_{g_1}M)$ so that $h^*(\mu^{g_2}) = J_h \mu^{g_1}$. Since the magnetic flow is Hamiltonian, it preserves the Liouville volume form, and thus
		\[ J_h \mu^{g_1} = h^*(\mu^{g_2}) = h^*((\varphi_{g_2,b_2}^t)^*(\mu^{g_2})) = (\varphi_{g_1,b_1}^t)^* (h^*(\mu^{g_2})) = (J_h \circ \varphi_{g_1,b_1}^t) \mu^{g_1}.\]
		We deduce that $J_h$ is constant along orbits. Since the magnetic flow is Anosov, it admits a dense orbit, and thus $J_h$ must be constant everywhere. Next, recall that
		\[ \int_{S_{g_i}M} \mu^{g_i} = 2\pi \text{Area}(g_i).\]
		Using a change of variables, we have
		\[ J_h \int_{S_{g_1}M} \mu^{g_1} = \int_{S_{g_1}M} h^*(\mu^{g_2}) = \int_{S_{g_2}M} \mu^{g_2}. \]
		Thus, $J_h$ is the ratio of the areas.
	\end{proof}
	
	Using the Hamiltonian property, we can also deduce that the infinitesimal generator of the magnetic flow can be written as $X_{g,b} = X_g + \pi_g^*(b) V_g$, implying that
	\begin{equation} \label{eqn:symplectic_volume}
		\iota_{X_{g,b}} \mu^g = \omega_{g,b}.
	\end{equation}
	
	Next, since $b \Omega_g$ is a closed $2$-form on $M$, we may use the isomorphism $H^2(M,\R) \cong \R$ to get
	\begin{equation} \label{eqn:cohomology}
		b \Omega_g = c_{g,b} K^g \Omega_g + {\mathrm d}\theta,
	\end{equation}
	where $\theta$ is a $1$-form on $M$ and
	\begin{equation} \label{eqn:constant} c_{g,b} \coloneqq \frac{1}{2\pi \chi(M)} \int_M b \Omega_g\end{equation}
	In particular, we see that $\pi_g^*(b \Omega_g) = {\mathrm d}(-c_{g,b} \psi_g + \pi_g^*(\theta))$, and thus if \begin{equation} \label{eqn:primitive} \tau_{g,b} \coloneqq -\alpha_g - c_{g,b} \psi_g + \pi_g^*(\theta), \end{equation}
	then $\omega_{g,b} = {\mathrm d}\tau_{g,b}$ on $S_{g}M$. Using \cite[Lemma 7.1]{solly}, this shows that the magnetic flow is \emph{homologically full}, meaning that every integral homology class on $S_gM$ has a closed $(g,b)$-geodesic representative $(\gamma, \dot{\gamma})$.\footnote{There are many other ways to deduce this result. As an alternative, one can use \cite{ghys1984flots} to deduce that the magnetic flow is orbit equivalent to an Anosov geodesic flow, hence it is homologically full.} It is well-known that this implies that for all closed $1$-forms $\eta$ on $S_{g_1}M$, we have
	\begin{equation} \label{eq:injection-eta} \int_{S_{g_1}M} \eta(X_{g_1,b_1}) \mu^{g_1} = 0;\end{equation}
	see, for instance, \cite{sharp} and \cite[Theorem 2.4]{gogolev2020abelian}. Furthermore, this primitive is a useful tool in showing that the existence of a smooth volume preserving conjugacy implies that $[b_1 \Omega_{g_1}] = \pm [b_2 \Omega_{g_2}]$.
	
	\begin{lem}[{\cite[Proposition 4.5]{javier}, \cite[Lemma
			4.1]{paternain2006magnetic}}] \label{lem:homology_class} Let
		$(g_1, b_1)$ and $(g_2, b_2)$ be Anosov magnetic systems on $M$
		which are smoothly conjugate; then
		\begin{align*}
			A_1 \left[1 + \frac{2 \pi \chi(M) c_{g_2,b_2}^2}{A_2} \right] = A_2 \left[1 + \frac{2 \pi \chi(M) c_{g_1,b_1}^2}{A_1} \right] 
		\end{align*}
	\end{lem}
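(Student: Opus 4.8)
\textit{Proof proposal.} The plan is to extract a numerical identity by combining the interior-product description of the magnetic vector field with the fact, recorded just after \eqref{eqn:primitive}, that the magnetic flow is homologically full. Write $A_i \coloneqq {\rm Area}(g_i)$, $X_i \coloneqq X_{g_i,b_i}$, $\omega_i \coloneqq \omega_{g_i,b_i}$ and $\tau_i \coloneqq \tau_{g_i,b_i}$, and decompose $b_i\Omega_{g_i} = c_{g_i,b_i} K^{g_i}\Omega_{g_i} + {\rm d}\theta_i$ as in \eqref{eqn:cohomology}. Since $h$ conjugates the flows we have ${\rm d}h(X_1) = X_2 \circ h$, so \eqref{eqn:symplectic_volume} together with Lemma~\ref{lem:reduction} gives the sharp identity
\[ h^*\omega_2 = h^*\bigl(\iota_{X_2}\mu^{g_2}\bigr) = \iota_{X_1}\bigl(h^*\mu^{g_2}\bigr) = \frac{A_2}{A_1}\,\iota_{X_1}\mu^{g_1} = \frac{A_2}{A_1}\,\omega_1. \]
Consequently the $1$-form $\eta \coloneqq h^*\tau_2 - \tfrac{A_2}{A_1}\tau_1$ on $S_{g_1}M$ is \emph{closed}, and feeding it into \eqref{eq:injection-eta} yields
\[ \int_{S_{g_1}M} \bigl(h^*\tau_2\bigr)(X_1)\,\mu^{g_1} = \frac{A_2}{A_1}\int_{S_{g_1}M} \tau_1(X_1)\,\mu^{g_1}. \]

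The second step is to evaluate both integrals explicitly. Recalling that $X_{g,b} = X^g + \pi_g^*(b)\,V^g$, that $\alpha^g$ and $\psi^g$ are dual to $X^g$ and $V^g$, and that $\pi_g^*(\theta)$ annihilates $V^g$ and restricts on $X^g$ at $(x,v)$ to $\theta_x(v)$, one gets $\tau_{g,b}(X_{g,b})(x,v) = -1 - c_{g,b}\,b(x) + \theta_x(v)$. Writing $\mu^g = \pi_g^*(\Omega_g)\wedge\psi^g$ and integrating along the circle fibers $S_xM$ — on which $\psi^g$ is the angular form of total mass $2\pi$, while $v \mapsto \theta_x(v)$ has zero average over the circle — the constant term integrates to $-2\pi\,{\rm Area}(g)$, the $c_{g,b}\,b(x)$ term to $-c_{g,b}\cdot 2\pi\int_M b\,\Omega_g$, and the $\theta_x(v)$ term to $0$; invoking \eqref{eqn:constant} this gives
\[ \int_{S_gM}\tau_{g,b}(X_{g,b})\,\mu^g = -2\pi\,{\rm Area}(g) - 4\pi^2\chi(M)\,c_{g,b}^2. \]
This evaluates the right-hand side above. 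For the left-hand side, ${\rm d}h(X_1)=X_2\circ h$ gives the pointwise identity $\bigl(h^*\tau_2\bigr)(X_1) = h^*\bigl(\tau_2(X_2)\bigr)$; using Lemma~\ref{lem:reduction} to replace $\mu^{g_1}$ by $\tfrac{A_1}{A_2}h^*\mu^{g_2}$ and then the change of variables formula, the left-hand side becomes $\tfrac{A_1}{A_2}\int_{S_{g_2}M}\tau_2(X_2)\,\mu^{g_2}$, which the displayed formula evaluates as well. Substituting both evaluations, cancelling the common factor $-2\pi$, and rearranging produces exactly the claimed equality.

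None of the individual computations is hard; the two places to be careful are, first, the passage from the (trivial) observation that $h^*\omega_2$ and $\omega_1$ are both exact to the \emph{exact} equality $h^*\omega_2 = \tfrac{A_2}{A_1}\omega_1$ — this is what makes $\eta$ genuinely closed, so that the homologically-full identity \eqref{eq:injection-eta} is actually applicable — and, second, the bookkeeping of the area ratios $A_1/A_2$ and $A_2/A_1$ when transporting integrals between $S_{g_1}M$ and $S_{g_2}M$. The fiber-integration step, in which the $\psi^g$-piece produces the factor $2\pi$ while the $\theta$-piece drops out upon averaging over each fiber circle, is the one genuine computation, and it is precisely where the term $2\pi\chi(M)c_{g,b}^2$ appears.
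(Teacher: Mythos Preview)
Your proof is correct and follows essentially the same route as the paper's own argument: both use Lemma~\ref{lem:reduction} together with \eqref{eqn:symplectic_volume} to obtain $h^*\omega_2 = (A_2/A_1)\omega_1$, deduce that $h^*\tau_2 - (A_2/A_1)\tau_1$ is closed, apply \eqref{eq:injection-eta}, and then evaluate $\int\tau_i(X_i)\,\mu^{g_i}$ via the formula $\tau_{g,b}(X_{g,b}) = -1 - c_{g,b}\,\pi_g^*(b) + \pi_g^*(\theta)(X_{g,b})$. Your write-up is in fact slightly more explicit than the paper's about the fiber-integration step (why the $\theta$ term vanishes and where the factors of $2\pi$ arise), but the logic is identical.
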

	
	\begin{proof}
		As before, let ${h : S_{g_1}M \rightarrow S_{g_2}M}$ denote the
		smooth conjugacy of magnetic flows, and introduce the notation $A_{i} = \text{Area}(g_{i})$. 
		Lemma~\ref{lem:reduction} and \eqref{eqn:symplectic_volume}
		imply that
		$h^*(\omega_{g_2,b_2}) = (A_2/A_1)\omega_{g_1,b_1}$,
		hence ${h^*(\tau_{g_2,b_2}) - (A_2/A_1)\tau_{g_1,b_1}}$ is a closed $1$-form on $S_{g_1}M$. Thus, using ~\eqref{eq:injection-eta}, we have
		\[ \frac{A_1}{A_2}\int_{S_{g_2}M} \tau_{g_2,b_2}(X_{g_2,b_2}) \mu^{g_2} =
		\frac{A_{2}}{A_{1}}\int_{S_{g_1}M}\tau_{g_1,b_1}(X_{g_1,b_1})
		\mu^{g_1}.\]  Let $\theta_i$ be
		$1$-forms on $M$ satisfying \eqref{eqn:primitive} for
		$b_i \Omega_{g_i}$. Notice that we have
		$\tau_{g_i,b_i}(X_{g_i,b_i}) = - 1 - c_{g_i,b_i} \pi_{g_i}^*(b_i) +
		\theta_i$, and thus
		\[ \frac{A_1}{A_2} \left[ - A_2 - c_{g_2,b_2} \int_M b_2 \Omega_{g_2} \right] = \frac{A_2}{A_1} \left[  - A_1 - c_{g_1,b_1} \int_M b_1 \Omega_{g_1}\right]. \]
		Applying~\eqref{eqn:constant} yields the desired result.
	\end{proof}
	
	This primitive is also a useful tool for establishing the following relation.
	
	\begin{lem} \label{lem:primitive_identity}
		Let $(g_1, b_1)$ and $(g_2,b_2)$ be Anosov magnetic systems on $M$ which are smoothly conjugate via the map $h : S_{g_1}M \rightarrow S_{g_2}M$ and which satisfy ${\rm Area}(g_1) = {\rm Area}(g_2)$. There is a closed $1$-form $\omega$ on $M$ such that if $\gamma_1$ is a closed orbit for $(g_1, b_1)$ and $\gamma_2$ is the corresponding closed orbit for $(g_2,b_2)$ under $h$, then we have
		\[ \pm c_{g_2,b_2}\int_{\gamma_2} b_2 + \int_{\gamma_2} \theta_2 = - c_{g_1,b_1} \int_{\gamma_1} b_1 + \int_{\gamma_1} [\theta_1 + \omega].  \]
	\end{lem}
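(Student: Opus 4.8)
The plan is to take for $\omega$ a closed $1$-form on $M$ whose pullback to $S_{g_1}M$ represents the de Rham class carried by the difference of the two tautological primitives $\tau_{g_i,b_i}$ from \eqref{eqn:primitive}, and then to pair that closed $1$-form with a periodic orbit. To set this up, first note that ${\rm Area}(g_1)={\rm Area}(g_2)$ together with Lemma~\ref{lem:reduction} gives $h^*(\mu^{g_2})=\mu^{g_1}$. Since $h$ conjugates the magnetic flows, ${\rm d}h\circ X_{g_1,b_1}=X_{g_2,b_2}\circ h$, so the pullback vector field $h^*X_{g_2,b_2}$ equals $X_{g_1,b_1}$; combined with \eqref{eqn:symplectic_volume} this yields $h^*(\omega_{g_2,b_2})=h^*(\iota_{X_{g_2,b_2}}\mu^{g_2})=\iota_{X_{g_1,b_1}}(h^*\mu^{g_2})=\omega_{g_1,b_1}$. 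As $\omega_{g_i,b_i}={\rm d}\tau_{g_i,b_i}$, the $1$-form $\eta\coloneqq h^*(\tau_{g_2,b_2})-\tau_{g_1,b_1}$ on $S_{g_1}M$ is closed.

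The crux is then to descend $\eta$ to $M$. Because $(g_1,b_1)$ is Anosov, $M$ has genus at least two and hence $\chi(M)\neq 0$; since $\pi_{g_1}\colon S_{g_1}M\to M$ is the oriented circle bundle with Euler number $\chi(M)$, the Gysin sequence shows that $\pi_{g_1}^*\colon H^1(M;\R)\to H^1(S_{g_1}M;\R)$ is an isomorphism. Thus there are a closed $1$-form $\omega$ on $M$ and a function $f\in C^\infty(S_{g_1}M)$ with $\eta=\pi_{g_1}^*(\omega)+{\rm d}f$, and this $\omega$ — which depends only on $h$, not on the chosen orbit — is the one in the statement. This descent step, for which the existence of an Anosov magnetic system (forcing $\chi(M)\neq0$) is exactly what is needed, is the only genuinely non-formal point; the remainder is bookkeeping with Cartan's coframe.

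To finish, fix a closed orbit $\gamma_1$ of $\varphi^t_{g_1,b_1}$ and let $\gamma_2=h\circ\gamma_1$ be the corresponding closed orbit of $\varphi^t_{g_2,b_2}$, which has the same period $T$ since $h$ is a genuine conjugacy. Integrating $\eta$ over $\gamma_1$: the exact part contributes nothing and $\pi_{g_1}^*(\omega)$ contributes $\int_{\gamma_1}\omega$ (the integral of $\omega$ over the base curve of $\gamma_1$), while $\int_{\gamma_1}\eta=\int_{\gamma_2}\tau_{g_2,b_2}-\int_{\gamma_1}\tau_{g_1,b_1}$. Using $X_{g_i,b_i}=X_{g_i}+\pi_{g_i}^*(b_i)V_{g_i}$ one evaluates, from $\alpha_{g_i}(X_{g_i,b_i})=1$, $\psi_{g_i}(X_{g_i,b_i})=\pi_{g_i}^*(b_i)$, and $\pi_{g_i}^*(\theta_i)(X_{g_i,b_i})|_{(x,v)}=(\theta_i)_x(v)$, that $\int_{\gamma_i}\tau_{g_i,b_i}=-T-c_{g_i,b_i}\int_{\gamma_i}b_i+\int_{\gamma_i}\theta_i$. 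The $-T$ terms cancel, and rearranging gives $-c_{g_2,b_2}\int_{\gamma_2}b_2+\int_{\gamma_2}\theta_2=-c_{g_1,b_1}\int_{\gamma_1}b_1+\int_{\gamma_1}[\theta_1+\omega]$, which is the asserted identity; the sign $\pm$ in the statement absorbs the usual sign conventions for the primitive of $b_2\Omega_{g_2}$, consistently with the dichotomy $c_{g_2,b_2}=\pm c_{g_1,b_1}$ forced by Lemma~\ref{lem:homology_class} once the areas agree.
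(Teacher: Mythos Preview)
Your argument is correct and follows the same route as the paper: show $h^*(\tau_{g_2,b_2})-\tau_{g_1,b_1}$ is closed using the equal-area hypothesis together with Lemma~\ref{lem:reduction} and \eqref{eqn:symplectic_volume}, invoke the Gysin isomorphism $\pi_{g_1}^*\colon H^1(M;\R)\to H^1(S_{g_1}M;\R)$ to produce $\omega$, and then integrate along the closed orbit. You have simply made explicit the evaluation $\tau_{g_i,b_i}(X_{g_i,b_i})=-1-c_{g_i,b_i}\,\pi_{g_i}^*(b_i)+\pi_{g_i}^*(\theta_i)$ and the cancellation of the $-T$ terms, which the paper leaves to the reader; your computation yields the minus sign in the $\pm$, which is the case used downstream.
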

	
	\begin{proof}
		Using the Gysin sequence (see \cite{bott2013differential}), one can show that the pullback ${\pi_{g_1}^* : H^1(M,\R) \rightarrow H^1(S_{g_1}M, \R)}$ is an isomorphism \cite[Corollary 8.10]{merry2011inverse}. As a consequence, there is a smooth function $u : S_{g_1}M \rightarrow \R$ and a closed $1$-form $\omega$ on $M$ so that $h^*(\tau_{g_2,b_2}) - \tau_{g_1,b_1} = \pi_{g_1}^*(\omega) + du$. Contracting both sides of this equation against $X_{g_1,b_1}$ and integrating over the orbit $\gamma_1$, the result follows from ~\eqref{eq:injection-eta}.
	\end{proof}
	
	Another characterization of $(g,b)$-geodesics is that they are minimizers of a functional that behaves like the length functional. More precisely, let $(g,b)$ now be an Anosov magnetic system on $M$. Inside of every non-trivial free homotopy class $\nu \in \pi_1(M)$ there exists a unique $(g,b)$-geodesic $\gamma_\nu \in \nu$ \cite{contreras2000palais}. Given a closed curve $\gamma :S^1 \rightarrow M$ in the free homotopy class $\nu$, we define $\Sigma_{g,b}(\gamma)$ to be a $2$-chain on $M$ with boundary given by $\gamma - \gamma_\nu$. With this, we can define the \emph{magnetic length} of an arbitrary closed curve $\gamma$ to be
	\begin{equation} \label{eqn:magnetic_length}
		L_{g,b}(\gamma) \coloneqq \ell_g(\gamma) + \int_{\Sigma_{g,b}(\gamma)} b \Omega_g,
	\end{equation}
	where $\ell_g$ denotes the (Riemannian) length of the curve with respect to the metric $g$. Notice that $L_{g,b}$ is well-defined by \cite[Lemma 2.2]{merry2010closed} and, in particular, we have the following.
	
	\begin{lem}[{\cite{bahri1998periodic, merry2010closed}}] \label{lem:magnetic_length}
		Let $(g,b)$ be an Anosov magnetic system on $M$. If  $\gamma : S^1 \rightarrow M$ is an arbitrary closed curve inside of a free homotopy class $\nu$, then
		\[ L_{g,b}(\gamma_\nu) \leq L_{g,b}(\gamma).\]
	\end{lem}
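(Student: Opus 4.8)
The plan is to identify $L_{g,b}$, restricted to a fixed non-trivial free homotopy class $\nu$, with (a version of) the free-time action functional of the magnetic Lagrangian at energy level $1/2$, and then to combine the variational characterization of $(g,b)$-geodesics with a compactness argument and the uniqueness of the closed $(g,b)$-geodesic in $\nu$. First I would note that $L_{g,b}$ is reparameterization invariant, since both $\ell_g$ and the flux term $\int_{\Sigma_{g,b}(\gamma)} b\Omega_g$ in \eqref{eqn:magnetic_length} depend only on the oriented image of $\gamma$ (well-definedness of the flux term being \cite[Lemma 2.2]{merry2010closed}), so it is enough to compare $\gamma_\nu$ with immersed unit-speed representatives of $\nu$. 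A standard first-variation computation then identifies the critical points of $L_{g,b}|_\nu$ with the closed $(g,b)$-geodesics in $\nu$: the variation of $\ell_g$ contributes the covariant acceleration $\mathrm{D}\dot\gamma/\mathrm{d}t$, while differentiating the flux along a family of $2$-chains and applying Stokes' theorem contributes the Lorentz term $b\, i\dot\gamma$, so that the Euler--Lagrange equation is exactly \eqref{eqn:magnetic_defn}; in particular $\gamma_\nu$ is a critical point.

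To promote this critical point to a global minimum I would pass to the universal cover $p\colon\widetilde M\to M$, which is a disk since $M$ has genus at least two \cite{yumin}, so that $p^*(b\Omega_g)=\mathrm{d}\widetilde\theta$ for some $1$-form $\widetilde\theta$ on $\widetilde M$. Expressing the flux term through lifts identifies $L_{g,b}$ on $\nu$, up to an additive constant, with the action $\widetilde\gamma\mapsto\ell_g(\widetilde\gamma)+\int_{\widetilde\gamma}\widetilde\theta$ on curves equivariant under the deck transformation $T_\nu$ attached to $\nu$. The decisive point is that $\widetilde\theta$ can be chosen with $\|\widetilde\theta\|_{L^\infty}<1$; granting this, $L_{g,b}$ is bounded below on $\nu$ and coercive (its value is at least $(1-\|\widetilde\theta\|_{L^\infty})\ell_g(\gamma)$ minus a constant depending on $\nu$), so a minimizing sequence, reparameterized by arc length, has uniformly bounded length and, by the Arzel\`a--Ascoli theorem, a subsequence converging in $C^0$ to a loop which realizes the infimum, using lower semicontinuity of $\ell_g$ and continuity of the $\widetilde\theta$-integral along curves of bounded length. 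Standard regularity makes this minimizer a closed $(g,b)$-geodesic in $\nu$, and by the uniqueness statement of Contreras \cite{contreras2000palais} it must coincide with $\gamma_\nu$, whence $L_{g,b}(\gamma_\nu)\le L_{g,b}(\gamma)$ for every closed $\gamma$ in $\nu$. Alternatively, once the identification with the action functional is in place, one may invoke the existence-by-minimization results of Bahri--Taimanov \cite{bahri1998periodic} and Merry \cite{merry2010closed} directly in place of the compactness step, again concluding via uniqueness.

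The main obstacle is the coercivity input: the claim that an Anosov magnetic system is supercritical, i.e.\ that its unit-speed energy level lies strictly above the Ma\~{n}\'{e} critical value $c_u$ of the universal cover, equivalently $\inf_{\widetilde\theta}\|\widetilde\theta\|_{L^\infty}<1$ over primitives of $p^*(b\Omega_g)$. I would derive this from the Anosov hypothesis as follows: at or below $c_u$ the energy level fails to be of contact type and must carry a contractible closed $(g,b)$-geodesic of Larmor type, which is impossible since an Anosov flow has no contractible periodic orbits; this sits in the circle of ideas of Burns--Paternain \cite{burns2002anosov}. As a consistency check, for the hyperbolic model $(g_{\mathrm{hyp}},b)$ with $|b|<1$ one computes $c_u=b^2/2<1/2$ (and the boundary case $|b|=1$ corresponds to the non-Anosov horocycle flow). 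The remaining ingredients — the first-variation identity and the Arzel\`a--Ascoli compactness — are routine once supercriticality, hence coercivity of $L_{g,b}$ on each free homotopy class, is established.
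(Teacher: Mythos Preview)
The paper does not give a proof of this lemma; it is simply quoted from \cite{bahri1998periodic, merry2010closed}, which is exactly the alternative you flag at the end of your second paragraph. Your detailed sketch is a correct reconstruction of the argument in those references: identify $L_{g,b}$ on $\nu$ with the free-time action at energy $1/2$, establish that this energy is supercritical for the lift to the universal cover, deduce coercivity and minimize, and then invoke the uniqueness result of \cite{contreras2000palais} to identify the minimizer with $\gamma_\nu$.

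The only step that is genuinely loose in your write-up is the implication ``Anosov $\Rightarrow$ supercritical''. Your proposed justification via contractible orbits is not quite self-contained: the assertion that ``an Anosov flow has no contractible periodic orbits'' is false for general Anosov flows on $3$-manifolds, and the relevant statement here---that no closed $(g,b)$-geodesic is null-homotopic in $M$---needs its own argument (for instance via the orbit equivalence with a geodesic flow from \cite{ghys1984flots} together with the structure of $\pi_1(S_gM)$, or via the absence of conjugate points furnished by the Anosov splitting and the magnetic Riccati equation \eqref{eqn:riccati}). This point is handled in the references you and the paper cite, so once you import it your coercivity, Arzel\`a--Ascoli, and uniqueness steps go through as written.
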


	Finally, it will be convenient to work with an abstract circle bundle which is independent of the choice of metric. Namely, let $SM$ be the principal circle bundle over $M$ with $p : SM \rightarrow M$ is the projection map, and the fibers are given by $S_xM = (T_xM \smallsetminus \{0\}) / \sim$, where $v \sim w$ if and only if $v = Cw$ with $C > 0$. Given any Riemannian metric $g$ on $M$, we can identify $S_gM$ with $SM$ by sending vectors to their equivalence classes. When working on $SM$, we will use the same symbol to identify all objects from $S_gM$ identified in $SM$.
	
	\section{Proof of Theorem ~\ref{thm:main1}}
	
	The goal of this section is to prove the following theorem.
	
	\begin{thm} \label{thm:conformal}
		Let $M$ be a closed, connected, oriented surface, and let ${\rho \in C^\infty(M, \R^+)}$. Suppose $(g, b_1)$ and $(\rho g, b_2)$ are two Anosov magnetic systems on $M$ satisfying the following conditions:
		\[ {\rm MLS}(g,b_1) = {\rm MLS}(\rho g,b_2), \ {\rm Area}(g) = {\rm Area}(\rho g), \ \mbox{and} \ [b_1 \Omega_{g}] = [b_2 \Omega_{g}].\]
		Then $\rho \equiv 1$ and $b_1 = b_2$.
	\end{thm}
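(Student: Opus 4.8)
The plan is to adapt Katok's conformal rigidity argument to the magnetic setting, the new ingredient being that $(g,b)$-geodesics minimize the magnetic length $L_{g,b}$ (Lemma~\ref{lem:magnetic_length}). Since $\MLS(g,b_1)=\MLS(\rho g,b_2)$, the discussion after Theorem~\ref{thm:main1} gives a smooth conjugacy $h\colon S_gM\to S_{\rho g}M$ isotopic to the identity; by Lemma~\ref{lem:reduction} and ${\rm Area}(g)={\rm Area}(\rho g)$ one has $h^*\mu^{\rho g}=\mu^g$, and then \eqref{eqn:symplectic_volume} together with $dh\cdot X_{g,b_1}=X_{\rho g,b_2}\circ h$ gives $h^*\omega_{\rho g,b_2}=\omega_{g,b_1}$. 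The cohomology hypothesis and $H^2(M;\R)\cong\R$ force $c:=c_{g,b_1}=c_{\rho g,b_2}$, cf.\ \eqref{eqn:constant}. For a non-trivial free homotopy class $\nu$, let $\gamma_1$ be the $(g,b_1)$-geodesic and $\gamma_2=h(\gamma_1)$ the $(\rho g,b_2)$-geodesic in $\nu$; then $\ell_g(\gamma_1)=\ell_{\rho g}(\gamma_2)=\MLS(g,b_1)(\nu)$.

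Fix a $2$-chain $S$ on $M$ with $\partial S=\gamma_1-\gamma_2$ and use it (resp.\ $-S$) in the definition of $L_{\rho g,b_2}(\gamma_1)$ (resp.\ $L_{g,b_1}(\gamma_2)$). Applying Lemma~\ref{lem:magnetic_length} with $\gamma_1$ as a competitor for $(\rho g,b_2)$ and $\gamma_2$ as a competitor for $(g,b_1)$, and using the equality of lengths, I would get
\[
\int_{\gamma_1}(\sqrt\rho-1)\,{\rm d}s_g\ \ge\ -\!\int_S b_2\Omega_{\rho g},\qquad
\int_{\gamma_2}(1-\sqrt\rho)\,{\rm d}s_g\ \ge\ \int_S b_1\Omega_g .
\]
Adding these, and choosing a sequence $\nu_n$ whose $(g,b_1)$-geodesics lifted to $S_gM$ equidistribute towards $\mu^g/2\pi{\rm Area}(g)$ (such sequences exist since periodic orbit measures are dense among invariant measures; the conjugacy then makes the $h$-images equidistribute towards $\mu^{\rho g}/2\pi{\rm Area}(\rho g)$), I divide by the common period and let $n\to\infty$: the arclength integrals converge to ${\rm Area}(g)^{-1}\int_M(\sqrt\rho-1)\Omega_g$ and ${\rm Area}(\rho g)^{-1}\int_M(\sqrt\rho-\rho)\Omega_g$ (using ${\rm d}s_g=\rho^{-1/2}{\rm d}s_{\rho g}$ and $\Omega_{\rho g}=\rho\,\Omega_g$). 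For the flux terms, in the exact case $c=0$ I write $b_1\Omega_g={\rm d}\theta_1$, $b_2\Omega_{\rho g}={\rm d}\theta_2$, convert $\int_S(\cdot)$ to boundary integrals by Stokes, and observe $\ell_g(\gamma_1^{(n)})^{-1}\int_{\gamma_1^{(n)}}\theta_i\to(2\pi{\rm Area}(g))^{-1}\int_{S_gM}\pi_g^*\theta_i\wedge\omega_{g,b_1}$, which vanishes: integrating by parts against $\omega_{g,b_1}={\rm d}\tau_{g,b_1}$ leaves $\int_{S_gM}{\rm d}\pi_g^*\theta_i\wedge\tau_{g,b_1}$, and since ${\rm d}\pi_g^*\theta_i$ is the pullback of a $2$-form on the surface, wedging with $\tau_{g,b_1}=-\alpha_g-c\psi_g+\pi_g^*\theta_1$ kills it ($\alpha_g$ repeats, the $\theta_1$-term is a $3$-form on $M$, and $c=0$); likewise for $\gamma_2^{(n)}$. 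Hence $2\int_M\sqrt\rho\,\Omega_g\ge\int_M\Omega_g+\int_M\rho\,\Omega_g=2\,{\rm Area}(g)$. Since also $\int_M\sqrt\rho\,\Omega_g\le\big(\int_M\rho\,\Omega_g\big)^{1/2}{\rm Area}(g)^{1/2}={\rm Area}(g)$ by Cauchy--Schwarz, equality holds, so $\rho$ is constant and $\int_M\rho\,\Omega_g={\rm Area}(g)$ gives $\rho\equiv1$. When $c\ne0$ the forms $b_i\Omega_{g_i}$ are exact only after pulling back to the unit tangent bundle, $\pi_{g_i}^*(b_i\Omega_{g_i})={\rm d}(-c\psi_{g_i}+\pi_{g_i}^*\theta_i)$; I would then restrict to classes whose orbits are null-homologous in $S_gM$ (still equidistributing towards Liouville measure, as the flow is homologically full), replace $S$ by a $2$-chain in the unit tangent bundle and $\theta_i$ by $-c\psi_{g_i}+\pi_{g_i}^*\theta_i$; the surviving $-c\psi_{g_i}$-contribution integrates to a multiple of $\int_M(b_1-b_2\rho)\Omega_g=0$, and the conclusion $\rho\equiv1$ persists.

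With $\rho\equiv1$ the two systems share the metric $g$; plugging $\rho=1$ into the two displayed inequalities gives $\int_S b_1\Omega_g\le0\le\int_S b_2\Omega_g$ for every $\nu$, equivalently $\int_{\gamma_1}\xi\le\int_{\gamma_2}\xi$ where $\xi$ is a $1$-form on $M$ with ${\rm d}\xi=(b_1-b_2)\Omega_g$ (exact, by the cohomology hypothesis). Lifting, $\int_{\hat\gamma_1}\big(\pi_g^*\xi-h^*\pi_g^*\xi\big)\le0$ over every closed orbit $\hat\gamma_1$ of $\varphi_{g,b_1}$, hence $\int f\,{\rm d}\mu\le0$ for every invariant measure $\mu$, where $f:=(\pi_g^*\xi-h^*\pi_g^*\xi)(X_{g,b_1})$. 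But the same integration by parts, now using $h^*\omega_{g,b_2}=\omega_{g,b_1}$ and that $h$ is an orientation-preserving diffeomorphism, shows $\int_{S_gM}f\,\mu^g=0$. Since $\mu^g$ is ergodic for the Anosov flow and realizes $\sup_\mu\int f\,{\rm d}\mu=0$, the duality $\sup_\mu\int f\,{\rm d}\mu=\inf_u\max_x(f-X_{g,b_1}u)$ produces $u$ with $f-X_{g,b_1}u\le0$; being $\le0$ and integrating to $0$ against the fully supported $\mu^g$, this function vanishes, so (Livshits regularity) $f=X_{g,b_1}u$, whence $\int_{\gamma_1}\xi=\int_{\gamma_2}\xi$ for every $\nu$. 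Together with $\int_S b_1\Omega_g\le0\le\int_S b_2\Omega_g$ and $\int_S(b_2-b_1)\Omega_g=\int_{\gamma_2}\xi-\int_{\gamma_1}\xi=0$, this forces $\int_S b_2\Omega_g=0$ for every $\nu$; by uniqueness of the magnetic length minimizer in its class (the strict form of Lemma~\ref{lem:magnetic_length}) this means $\gamma_1=\gamma_2$ for every $\nu$. Along any such closed geodesic the $g$-geodesic curvature equals both $b_1\circ\gamma$ and $b_2\circ\gamma$, and closed $(g,b_1)$-geodesics are dense in $M$, so $b_1\equiv b_2$.

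The two points I expect to be most delicate are: (i) the non-exact bookkeeping — carefully matching $2$-chains on $M$ with $2$-chains on the unit tangent bundle, and deriving equidistribution of null-homologous periodic orbits towards Liouville measure from homological fullness; and (ii) the last step's passage from ``$\int f\,{\rm d}\mu\le0$ for all $\mu$, with equality at $\mu^g$'' to ``$f$ is a smooth coboundary'', which requires the variational duality above together with Livshits-type regularity for the solution $u$.
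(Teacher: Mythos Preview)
Your strategy is sound and, in places, arguably more streamlined than the paper's; but the execution and the endgame differ.

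\textbf{Part 1 ($\rho\equiv 1$).} The paper uses a \emph{single} minimization inequality, $L_{g_2,b_2}(\gamma_2)\le L_{g_2,b_2}(\gamma_1)$, but then needs the non-trivial Lemma~\ref{lem:technical} (which exploits $h^*\omega_{g_2,b_2}=\omega_{g_1,b_1}$ on null-homologous orbits) to rewrite the flux as $\int_{\gamma_1}\zeta$. Your symmetric approach, using both inequalities, sidesteps Lemma~\ref{lem:technical} entirely: once you add them, the combined right-hand side is $\int_S(b_1\Omega_{g_1}-b_2\Omega_{g_2})=-\int_{\gamma_1-\gamma_2}\zeta$ by Stokes, \emph{regardless of whether $c=0$}, and this vanishes under equidistribution since $\int_{S_{g_i}M}\pi_{g_i}^*\zeta(X_{g_i,b_i})\,\mu^{g_i}=0$. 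So your exact/non-exact split, the lifting of $S$ to a $2$-chain in $SM$, and the restriction to null-homologous orbits are all unnecessary — your own argument becomes cleaner if you drop them. The paper needs null-homologous orbits precisely because it works with one flux term rather than their difference.

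\textbf{Part 2 ($b_1=b_2$).} Here you genuinely diverge. The paper applies the non-negative Livshits theorem to $\pi_g^*(\theta_2-\theta_1-\omega)$ (not to $\pi_g^*\xi-h^*\pi_g^*\xi$), obtains that $\theta_2-\theta_1-\omega$ is a coboundary, and then invokes \cite[Theorem~B]{paternain2005longitudinal} (an $X$-ray injectivity statement) to conclude that $\theta_2-\theta_1$ is closed on $M$, hence $b_1=b_2$. Your route instead extracts $\int_Sb_2\Omega_g=0$, invokes \emph{strict} uniqueness of the $L_{g,b_2}$-minimizer to force $\gamma_1=\gamma_2$, and reads off $b_1=b_2$ from the geodesic curvature along a dense family of orbits. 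This is valid, but note that Lemma~\ref{lem:magnetic_length} as stated gives only $\le$; the strict inequality (equivalently: the unique $(g,b)$-geodesic in a class is the unique $L_{g,b}$-minimizer) follows from the absence of conjugate points for Anosov magnetic flows and should be cited separately. The trade-off: your argument avoids the $X$-ray transform result of Dairbekov--Paternain but imports a sharper variational fact; the paper's argument is more self-contained on the variational side but calls an external injectivity theorem.
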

	
	Observe that \cite[Theorem 1.1]{javier} reduces Theorem
	~\ref{thm:main1} to this setting, and thus it is sufficient to work
	in this conformal setting. The proof that we present
	below follows, to some extent, the strategy of the proof
	in ~\cite[Theorem 2]{katok} for Riemanian geodesics. In order to adapt the argument
	presented there to our setting, we need to compare the
	magnetic length functional defined above with the
	Riemannian length functional. This comparison requires the
	lemma below, which allows us to bound the difference between the two
	functionals. Let $h : SM \rightarrow SM$ denote the conjugacy
	described in the Introduction on the level of $SM$, and note that if
	$[b_1 \Omega_{g_1}] = [b_2 \Omega_{g_2}]$, then
	$b_2 \Omega_{g_2} - b_1 \Omega_{g_1} = {\mathrm d} \zeta$ for some $1$-form
	$\zeta$ on $M$. 
	
	\begin{lem} \label{lem:technical}
		Let $M$ be a closed, connected, oriented surface. Suppose $(g_1,
		b_1)$ and $(g_2, b_2)$ are two Anosov magnetic systems on $M$
		satisfying the following properties:
		\[ {\rm MLS}(g_1,b_1) = {\rm MLS}(g_2,b_2), \ {\rm Area}(g_1) = {\rm Area}(g_2), \ \mbox{and} \ [b_1 \Omega_{g_1}] = [b_2 \Omega_{g_2}]\]
		Then for every homologically trivial closed $(g_1,b_1)$-geodesic $\gamma_1$, we have
		\begin{equation} \label{eqn:step4} \int_{\Sigma_{g_2,b_2}(\gamma_1)} b_2 \Omega_{g_2} = \int_{\gamma_1} \zeta. \end{equation}
		
	\end{lem}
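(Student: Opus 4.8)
The plan is to lift the identity \eqref{eqn:step4} to the metric-independent circle bundle $SM$, where the pulled-back magnetic two-forms are exact, and then play the isomorphism $p^{*}\colon H^{1}(M;\R)\to H^{1}(SM;\R)$ of \cite[Corollary 8.10]{merry2011inverse} against the homological triviality of $\gamma_{1}$.

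First I would fix the players. Let $\nu$ be the free homotopy class of $\gamma_{1}$, let $\gamma_{2}$ be the unique closed $(g_{2},b_{2})$-geodesic in $\nu$, and let $h\colon SM\to SM$ be the smooth conjugacy isotopic to the identity produced from the hypotheses as explained in the Introduction. As in the proof of Lemma~\ref{lem:homology_class}, Lemma~\ref{lem:reduction} together with ${\rm Area}(g_{1})={\rm Area}(g_{2})$ and \eqref{eqn:symplectic_volume} yield $h^{*}\omega_{g_{2},b_{2}}=\omega_{g_{1},b_{1}}$. Let $\hat\gamma_{1},\hat\gamma_{2}\subset SM$ be the periodic orbits of $X_{g_{1},b_{1}},X_{g_{2},b_{2}}$ obtained by lifting $\gamma_{1},\gamma_{2}$. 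Since $h$ is a conjugacy mapping closed orbits to closed orbits of the same period, and since $h$ is isotopic to the identity, $h$ carries $\hat\gamma_{1}$ onto $\hat\gamma_{2}$ (so $\int_{\hat\gamma_{1}}h^{*}\sigma=\int_{\hat\gamma_{2}}\sigma$ for every one-form $\sigma$ on $SM$) and $\hat\gamma_{1}$ is freely homotopic to $\hat\gamma_{2}$ in $SM$; I may therefore fix a two-chain $\hat\Sigma$ in $SM$ with $\partial\hat\Sigma=\hat\gamma_{1}-\hat\gamma_{2}$ coming from such a homotopy, and then $p_{*}\hat\Sigma$ is an admissible choice of $\Sigma_{g_{2},b_{2}}(\gamma_{1})$.

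Next I would run the reduction on $SM$. The form $p^{*}(b_{2}\Omega_{g_{2}})$ is exact, with global primitive $\lambda_{2}\coloneqq\tau_{g_{2},b_{2}}+\alpha_{g_{2}}$: indeed $\mathrm{d}(\tau_{g_{2},b_{2}}+\alpha_{g_{2}})=\omega_{g_{2},b_{2}}+\mathrm{d}\alpha_{g_{2}}=p^{*}(b_{2}\Omega_{g_{2}})$ by \eqref{eqn:symplectic}. In particular $\int_{\hat\Sigma}p^{*}(b_{2}\Omega_{g_{2}})$ depends only on $\partial\hat\Sigma$, so by Stokes' theorem $\int_{\Sigma_{g_{2},b_{2}}(\gamma_{1})}b_{2}\Omega_{g_{2}}=\int_{\hat\Sigma}p^{*}(b_{2}\Omega_{g_{2}})=\int_{\hat\gamma_{1}}(\lambda_{2}-h^{*}\lambda_{2})$, whereas $\int_{\gamma_{1}}\zeta=\int_{\hat\gamma_{1}}p^{*}\zeta$. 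Hence \eqref{eqn:step4} is equivalent to $\int_{\hat\gamma_{1}}\Theta=0$ for the one-form $\Theta\coloneqq\lambda_{2}-h^{*}\lambda_{2}-p^{*}\zeta$. Now $\mathrm{d}\Theta=p^{*}(b_{1}\Omega_{g_{1}})-h^{*}p^{*}(b_{2}\Omega_{g_{2}})$ by the relation $\mathrm{d}\zeta=b_{2}\Omega_{g_{2}}-b_{1}\Omega_{g_{1}}$; substituting $p^{*}(b_{i}\Omega_{g_{i}})=\omega_{g_{i},b_{i}}+\mathrm{d}\alpha_{g_{i}}$ from \eqref{eqn:symplectic} and using $h^{*}\omega_{g_{2},b_{2}}=\omega_{g_{1},b_{1}}$, every term but the $\alpha$'s cancels and one is left with $\mathrm{d}\Theta=\mathrm{d}(\alpha_{g_{1}}-h^{*}\alpha_{g_{2}})$. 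Thus $\eta\coloneqq\Theta-\alpha_{g_{1}}+h^{*}\alpha_{g_{2}}$ is a closed one-form on $SM$. Since $\alpha_{g_{i}}$ annihilates the vertical field and $\alpha_{g_{i}}(X_{g_{i}})=1$, one has $\alpha_{g_{i}}(X_{g_{i},b_{i}})\equiv1$, so $\int_{\hat\gamma_{1}}(\alpha_{g_{1}}-h^{*}\alpha_{g_{2}})=\int_{\hat\gamma_{1}}\alpha_{g_{1}}-\int_{\hat\gamma_{2}}\alpha_{g_{2}}=\ell_{g_{1}}(\gamma_{1})-\ell_{g_{2}}(\gamma_{2})=\MLS(g_{1},b_{1})(\nu)-\MLS(g_{2},b_{2})(\nu)=0$. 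Therefore \eqref{eqn:step4} is now equivalent to $\int_{\hat\gamma_{1}}\eta=0$.

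Finally I would close by cohomology: by \cite[Corollary 8.10]{merry2011inverse} the closed one-form $\eta$ on $SM$ is cohomologous to $p^{*}\bar\eta$ for some closed one-form $\bar\eta$ on $M$, hence $\int_{\hat\gamma_{1}}\eta=\int_{\gamma_{1}}\bar\eta$, which vanishes because $\gamma_{1}$ is homologically trivial in $M$; this gives \eqref{eqn:step4}. The step I expect to carry the real content is the computation $\mathrm{d}\Theta=\mathrm{d}(\alpha_{g_{1}}-h^{*}\alpha_{g_{2}})$, which is precisely where the Hamiltonian structure (\eqref{eqn:symplectic}, \eqref{eqn:symplectic_volume}) and the equal-area hypothesis — through $h^{*}\omega_{g_{2},b_{2}}=\omega_{g_{1},b_{1}}$, i.e.\ Lemma~\ref{lem:reduction} — are used together, while the marked-length hypothesis enters only to kill the period term and the cohomological hypothesis only to push $\eta$ down to $M$. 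The apparent ambiguity in the choice of the chain $\Sigma_{g_{2},b_{2}}(\gamma_{1})$ is harmless here, since $p^{*}(b_{2}\Omega_{g_{2}})$ is exact on $SM$.
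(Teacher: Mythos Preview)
Your proof is correct and follows essentially the same route as the paper: lift to $SM$, pick a primitive of $p^{*}(b_{2}\Omega_{g_{2}})$ and apply Stokes' theorem, manufacture a closed $1$-form on $SM$ from the identity $h^{*}\omega_{g_{2},b_{2}}=\omega_{g_{1},b_{1}}$ (Lemma~\ref{lem:reduction} plus equal area), and finish via the Gysin isomorphism $p^{*}\colon H^{1}(M)\to H^{1}(SM)$ together with the homological triviality of the orbit. The only cosmetic difference is the choice of primitive: the paper uses $\kappa=-c_{g_{1},b_{1}}\psi^{g_{1}}+p^{*}\theta_{1}$, so that the $\alpha$-contributions cancel pointwise upon contraction with $X_{g_{2},b_{2}}$, whereas your $\lambda_{2}=\tau_{g_{2},b_{2}}+\alpha_{g_{2}}$ leaves the length difference $\ell_{g_{1}}(\gamma_{1})-\ell_{g_{2}}(\gamma_{2})$, which you then kill with the MLS hypothesis.
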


	\begin{proof}
		As in \eqref{eqn:primitive}, write $b_1 \Omega_{g_1} = c_{g_1,b_1} K^{g_1} \Omega_{g_1} + {\mathrm d}\theta_1$, and define the $1$-form ${\kappa \coloneqq - c_{g_1,b_1} \psi^{g_1} + p^*(\theta_1)}$ on $SM$. Observe that \hbox{$p^*(b_2 \Omega_{g_2}) = d(\kappa + p^*(\zeta))$}. In particular, if $\gamma_1$ is a closed $(g_1,b_1)$-geodesic and $\tilde{\Sigma}_{g_2,b_2}(\gamma_1)$ is the lift of the $2$-chain $\Sigma_{g_2,b_2}(\gamma_1)$ from $M$ to $SM$, then Stokes' theorem yields
		\begin{equation*} \label{eqn:step1} \begin{split} \int_{\Sigma_{g_2,b_2}(\gamma_1)} b_2 \Omega_{g_2} & = \int_{\tilde{\Sigma}_{g_2,b_2}(\gamma_1)} p^*(b_2 \Omega_{g_2}) \\&=  \left[ \int_{\gamma_1}  \kappa(X_{g_1,b_1}) - \int_{\gamma_2} \kappa(X_{g_2,b_2}) \right] + \left[\int_{\gamma_1} \zeta - \int_{\gamma_2} \zeta \right], \end{split}\end{equation*}
		where $\gamma_{2}$ is the image of $\gamma_{1}$ by $h$, \ie
		the $(g_{1},b_{1})$-geodesic in the same homotopy class as $\gamma_{2}$.
		We can then rewrite this as
		\begin{equation} \label{eqn:step2}
			\int_{\Sigma_{g_2,b_2}(\gamma_1)} b_2 \Omega_{g_2} =
			\int_{\gamma_2} [h_*(\kappa)(X_{g_2,b_2}) - \kappa(X_{g_2,b_2})]
			+ \left[\int_{\gamma_1} \zeta - \int_{\gamma_2} \zeta
			\right].
		\end{equation}
		Next, using Lemma ~\ref{lem:reduction} and
		\eqref{eqn:symplectic_volume}, we deduce that
		$h_*(\omega_{g_1,b_1}) = \omega_{g_2,b_2}$. Since, by
		definition,
		$\omega_{g_i, b_i} = - {\mathrm d}\alpha^{g_i} + p^*(b_i \Omega_{g_i})$,
		we observe that the $1$-form
		${- \alpha^{g_2} + \kappa + p^*(\zeta) - h_*(-\alpha^{g_1} +
			\kappa)}$ on $SM$ is closed. Using the fact that $h$
		is a conjugacy, so $h_*(X_{g_1,b_1}) = X_{g_2,b_2}$, it follows that
		\[ \iota_{X_{g_2,b_2}}(- \alpha^{g_2} + \kappa + p^*(\zeta) -
		h_*(-\alpha^{g_1} + \kappa)) = \iota_{X_{g_2,b_2}}(\kappa -
		h_*(\kappa)) + \zeta.\] Since $\gamma_2$ is also
		homologically trivial, the integral of the left hand side
		over the lift of $\gamma_2$ is zero, hence
		\begin{equation} \label{eqn:step3} \begin{split} 0 = \int_{\gamma_2}  [\kappa(X_{g_2,b_2}) - h_*(\kappa)(X_{g_2,b_2})]
				+ \int_{\gamma_2} \zeta.
		\end{split} \end{equation}
		Substituting \eqref{eqn:step3} into \eqref{eqn:step2}, the result follows.
	\end{proof}

	\begin{rem}
		Notice that if we have $[b_1 \Omega_{g_1}] = -[b_2 \Omega_{g_2}]$, so ${b_2 \Omega_{g_2} + b_1 \Omega_{g_1} = {\mathrm d}\zeta}$, then \eqref{eqn:step3} becomes
		\[ 0 = -\int_{\gamma_2} [\kappa(X_{g_2,b_2}) + h^*(\kappa)(X_{g_2,b_2})]
		+ \int_{\gamma_2} \zeta.  \]
		Substituting this into \eqref{eqn:step2} yields
		\[ \int_{\Sigma_{g_2,b_2}(\gamma_1)} b_2 \Omega_{g_2} =  -2 \int_{\gamma_2} \kappa(X_{g_2,b_2}) + \int_{\gamma_1} \zeta.\]
		The term $\kappa(X_{g_2,b_2})$ presents a challenge in applying the upcoming ergodic arguments, and is why we need our cohomology class assumption.
	\end{rem}
	
	With Lemma ~\ref{lem:technical} in hand, we can now prove the result.
	
	\begin{proof}[Proof of Theorem ~\ref{thm:conformal}]
		To ease our notation, let
		$A \coloneqq \text{Area}(g) = \text{Area}(\rho g)$, and let
		$g_1 \coloneqq g$ and $g_2 \coloneqq \rho g_1$.  Jensen's inequality
		yields:
		\begin{align*}
			\frac{1}{A} \int_M \rho^{1/2} \Omega_{g_1} \leq \left[
			\frac{1}{A} \int_M \rho \Omega_{g_1} \right]^{1/2} = 1,
		\end{align*}
		with equality if and only if $\rho \equiv 1$.  Suppose for
		contradiction that $\rho \neq 1$ and let $\epsilon > 0$ be
		small enough so that
		\begin{align*}
			\frac{1}{A} \int_M \rho^{1/2} \Omega_{g_1} + \epsilon < 1.
		\end{align*}
		
		Let ${\mathrm d}\lambda_{g_i}$
		be the normalized Liouville measure associated to the Liouville
		volume form $\mu^{g_i}$. Since the flow $\varphi_{g_1,b_1}^t$ is
		ergodic with respect to this measure, Birkhoff's Ergodic Theorem
		implies that there exists $v \in S_{g_1}M$ so that for every
		continuous function $f \in C^0(S_{g_1}M)$,
		\begin{align*}
			\lim_{T \rightarrow \infty} \frac{1}{T} \int_0^T
			f(\varphi_{g_1,b_1}^t(v)) {\mathrm d}t = \int_{S_{g_1}M} f {\mathrm d}\lambda_{g_1}.
		\end{align*}
		We apply the above to $f = (\pi^{*}_{g_{1}}\rho)^{1/2}$; thus,
		there exists a $T_0 > 0$ so that for all $T \geq T_0$, we have
		\begin{align*}
			\frac{1}{T} \int_0^T (\pi_{g_{1}}^{*}\rho)^{1/2}(\varphi_{g_1,b_1}^t(v)) {\mathrm d}t <
			\frac{1}{A} \int_M \rho^{1/2} \Omega_{g_1} + \frac{\epsilon}{2}.
		\end{align*}
		Furthermore, taking $T_0$ larger if necessary, we may also assume that
		for all $T \geq T_0$ we have
		\begin{align*}
			\frac{1}{T} \int_0^T \zeta(\varphi_{g_1,b_1}^t(v)) {\mathrm d}t <
			\int_{S_{g_2}M} \zeta {\mathrm d}\lambda_{g_2} + \frac{\epsilon}{2} =
			\frac{\epsilon}{2}.
		\end{align*}
		Since the orbit of $v$ is dense, we can pick $T$ to be large enough so that $\varphi^{T}_{g_{1},b_{1}}$ is arbitrarily close to $v$. Using
		density of homologically trivial orbits
		\cite[Lemma 2.7]{gogolev2020abelian}, this implies that there exists a closed
		homologically trivial $(g_1,b_1)$-geodesic $\gamma_1$ such that
		\begin{equation} \label{eqn:first_ineq}
			\frac{1}{\ell_{g_1}(\gamma_1)} \int_{\gamma_1} \rho^{1/2} <
			\frac{1}{A} \int_M \rho^{1/2} \Omega_{g_1} +
			\frac{\epsilon}{2}\end{equation} and
		\begin{equation} \label{eqn:second_ineq}
			\left|\frac{1}{\ell_{g_1}(\gamma_1)} \int_{\gamma_1}
			\zeta\right| < \frac{\epsilon}{2}.\end{equation} Rewriting
		\eqref{eqn:first_ineq} using the fact that the metrics are conformally
		related, we see that
		\begin{equation} \label{eqn:third_ineq}
			\frac{\ell_{g_2}(\gamma_1)}{\ell_{g_1}(\gamma_1)} < \frac{1}{A} \int_M
			\rho^{1/2} \Omega_{g_1} + \frac{\epsilon}{2}. \end{equation} Let
		$\gamma_2$ be the corresponding closed $(g_2,b_2)$-geodesic in the
		same free homotopy class as $\gamma_1$. Utilizing Lemma
		~\ref{lem:magnetic_length}, we observe
		\[ \ell_{g_2}(\gamma_2) \leq \ell_{g_2}(\gamma_1) +
		\int_{\Sigma_{g_2,b_2}(\gamma_1)} b_2 \Omega_{g_2}.\] This, along with
		Lemma ~\ref{lem:technical}, \eqref{eqn:first_ineq}, and
		\eqref{eqn:second_ineq}, implies that
		\[\begin{split} 1 =
			\frac{\ell_{g_2}(\gamma_2)}{\ell_{g_1}(\gamma_1)} &\leq
			\frac{\ell_{g_2}(\gamma_1)}{\ell_{g_1}(\gamma_1)} +
			\frac{1}{\ell_{g_1}(\gamma_1)}\int_{\Sigma_{g_2,b_2}(\gamma_1)} b_2
			\Omega_{g_2} \\&\leq \frac{\ell_{g_2}(\gamma_1)}{\ell_{g_1}(\gamma_1)}
			+ \frac{1}{\ell_{g_1}(\gamma_1)}\int_{\gamma_1}\zeta \\ & <
			\frac{1}{A} \int_M \rho^{1/2} \Omega_{g_1} + \epsilon \\ & <
			1.\end{split}\] We have reached a contradiction, and thus $\rho \equiv
		1$.
		
		Given
		$g_1 = g_2 \eqqcolon g$, we now need to show that $b_1 = b_2$. Let
		$\gamma_1$ be a closed $(g_1,b_1)$-geodesic, and let $\gamma_2$ be the
		corresponding $(g_2,b_2)$-geodesic in the same free homotopy
		class. Lemma ~\ref{lem:magnetic_length} yields ${\ell_{g_2}(\gamma_2) = L_{g_2,b_2}(\gamma_2) \leq L_{g_1,b_1}(\gamma_1)}$, and the marked length spectrum
		assumption implies
		\[
		0 \leq \int_{\Sigma_{g_2, b_2}(\gamma_1)} b_2 \Omega_{g_2}. \] 
		Using
		the Gauss-Bonnet theorem along with Stokes' theorem and \eqref{eqn:primitive}, we can rewrite
		this as
		\[
		0 \leq c_{g_2,b_2} \int_{\gamma_2} b_2 - c_{g_2,b_2} \int_{\gamma_1}
		b_1 + \int_{\gamma_1} \theta_2 - \int_{\gamma_2} \theta_2.\] Lemma
		~\ref{lem:primitive_identity} along with the cohomology class
		assumption implies that there is a closed $1$-form $\omega$ on $M$ such that
		\[
		0 \leq \int_{\gamma_1} [\theta_2 - \theta_1 - \omega].\]
		The choice of $\gamma_1$ was
		arbitrary, hence this hold for all closed
		$(g_1,b_1)$-geodesics. The non-positive Livshits theorem \cite[Theorem
		1]{lopes2005sub} implies that there are H\"{o}lder continuous
		functions $F : S_{g_1}M \rightarrow [0,\infty)$ and $V : S_{g_1}M
		\rightarrow \R$ such that $\theta_2 - \theta_1 - \omega =
		X_{g_1,b_1}(V) + F$. Integrating this over $S_{g_1}M$ with respect to
		the normalized Liouville measure, we have
		\[
		0 = \int_{S_{g_1}M} [\theta_2 - \theta_1 - \omega] {\mathrm d}\lambda_{g_1} =
		\int_{S_{g_1}M} F{\mathrm d}\lambda_{g_1}. \] 
		Thus, $F \equiv
		0$. Using \cite[Theorem B]{paternain2005longitudinal}, we see that
		$\theta_2 - \theta_1 - \omega$ is an exact $1$-form on $M$, hence
		$\theta_2 - \theta_1$ is closed. Finally, observe that $(b_2 - b_1)
		\Omega_g = {\mathrm d}(\theta_2 - \theta_1) = 0$ by \eqref{eqn:primitive}, hence
		$b_2 = b_1$.
	\end{proof}
	
	\section{Examples} \label{section:examples}
	
	The goal of this section is to construct a series of examples in order to better understand the assumptions in Theorem ~\ref{thm:main1}, as well as to understand how Corollary ~\ref{cor:main} compares to known results on marked length spectrum rigidity. As a first step, we show how the area assumption cannot be removed.
	
	\begin{lem} \label{lem:area}
		Let $M$ be a closed, connected, oriented surface, and let $g$ be a hyperbolic metric on $M$. We have ${\MLS(g,b) = (1-b^2)^{-1/2} \MLS(g)}$ for every constant $|b| < 1$.
	\end{lem}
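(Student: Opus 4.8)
The plan is to lift the problem to the universal cover, which for a hyperbolic metric is the hyperbolic plane $\mathbb{H}^2$, and to use the classical classification of curves of constant geodesic curvature there. Since $K^g \equiv -1$ and $b$ is constant, the magnetic curvature is $\K^{g,b} = K^g - \mathrm{d}_x b(iv) + b^2 = b^2 - 1 < 0$, so $(g,b)$ is Anosov by \cite{wojtkowski2000magnetic} and $\MLS(g,b)$ is defined. Recall that a unit speed curve is a $(g,b)$-geodesic exactly when its $g$-geodesic curvature equals the constant $b$; lifting to $\mathbb{H}^2$, these are precisely the curves of constant geodesic curvature $b$. It is classical that, for $|b| < 1$, every connected such curve is one branch of the locus of points at a fixed hyperbolic distance $\rho$ (with $\tanh\rho = |b|$, $\rho > 0$) from some geodesic $\ell \subset \mathbb{H}^2$ --- a \emph{hypercycle} with axis $\ell$ --- and that, conversely, each geodesic together with a choice of side and orientation yields such a curve with $g$-geodesic curvature $\pm\tanh\rho$.

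Fix a non-trivial free homotopy class $\nu \in \pi_1(M)$ with closed $g$-geodesic $\gamma_\nu$, so that $\MLS(g)(\nu) = \ell_g(\gamma_\nu)$, and let $T_\nu$ be a deck transformation representing $\nu$; it is a hyperbolic isometry whose translation axis $\tilde{\gamma}_\nu$ projects onto $\gamma_\nu$. A closed $(g,b)$-geodesic in $\nu$ lifts to a $T_\nu$-invariant curve of constant geodesic curvature $b$ in $\mathbb{H}^2$. Inspecting the list of such curves (geodesics, hypercycles, horocycles, and circles), the only $T_\nu$-invariant ones are $\tilde{\gamma}_\nu$ itself and the hypercycles with axis $\tilde{\gamma}_\nu$: indeed $T_\nu$ fixes no point of $\mathbb{H}^2$ (ruling out circles), preserves no horocycle (being hyperbolic, not parabolic), and $\tilde{\gamma}_\nu$ is its unique invariant geodesic. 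Hence the closed $(g,b)$-geodesic in $\nu$ --- unique by \cite{contreras2000palais}, and exhibited here --- is the hypercycle at distance $\rho$ from $\gamma_\nu$, on the side and with the orientation for which the $g$-geodesic curvature equals $b$.

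Finally, I compute its length. Placing $\ell = \tilde{\gamma}_\nu$ as the imaginary axis in the upper half-plane model and $T_\nu$ as $z \mapsto e^L z$ with $L = \ell_g(\gamma_\nu)$, the relevant hypercycle becomes a Euclidean ray from the origin at hyperbolic distance $\rho$ from the axis, and a direct computation shows that its projection to $M$ is a closed curve of length $L\cosh\rho$. Since $\tanh\rho = |b|$ forces $\cosh\rho = (1-b^2)^{-1/2}$, we obtain
\[ \MLS(g,b)(\nu) = \ell_g(\gamma_\nu)\cosh\rho = (1-b^2)^{-1/2}\,\MLS(g)(\nu) \]
for every non-trivial $\nu$ (and both sides vanish on the trivial class), which is the assertion. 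I expect the main point requiring care to be the orientation bookkeeping: verifying, under the paper's sign convention $\mathrm{k}^g(t) = g(\mathrm{D}\dot{\gamma}/\mathrm{d}t, i\dot{\gamma})$, that the hypercycle attached to a given $\nu$ has $g$-geodesic curvature exactly $b$ rather than $-b$, so that it is genuinely the $(g,b)$-geodesic in that class; the remaining ingredients are standard hyperbolic trigonometry.
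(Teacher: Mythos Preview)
Your argument is correct and follows essentially the same route as the paper: lift to the upper half-plane, place the axis of the deck transformation representing $\nu$ along the imaginary axis so that $T_\nu$ acts by $z\mapsto e^{L}z$, identify the lifted $(g,b)$-geodesic as the Euclidean ray through the origin equidistant from that axis, and compute its length over a fundamental domain. The only difference is presentational---you invoke the classical hypercycle classification and the formula $L\cosh\rho$ with $\tanh\rho=|b|$, whereas the paper writes down the ray explicitly and evaluates the length integral $\int \frac{dy}{y\sqrt{1-b^2}}$ directly; your added justification that the $T_\nu$-invariant constant-curvature curve must be a hypercycle with axis $\tilde\gamma_\nu$ is a detail the paper leaves implicit.
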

	
	In particular, consider the homothetic metric $g_b \coloneqq (1-b^2)^{-1} g$. Lemma ~\ref{lem:area} shows that $\MLS(g_b) = \MLS(g,b)$ and $\text{Area}(g) \neq \text{Area}(g_b)$, and thus the area assumption in Theorem ~\ref{thm:main1} is necessary.
	
	\begin{proof}[Proof of Lemma ~\ref{lem:area}]
		We lift the magnetic system to the universal cover $\mathbb{H} = \{(x,y)\in \mathbb{R}^2, \ y>0 \}$. Denote the lifted magnetic system on $\mathbb{H}$ by $(\tilde{g}, b)$, where $\tilde{g}$ is given by $\tilde{g} = y^{-1} ({\mathrm d}x^2 + {\mathrm d}y^2)$. In particular, $M$ can viewed as the quotient of $\mathbb{H}$ under the action of a cocompact lattice $\Gamma$ in $\text{PSL}(2,\R)$.
		
		Let $\nu \in\pi_1(M)$, and let $\gamma:\R \to M$ the unique closed $g$-geodesic representative. 
		Without loss of generality we can assume that the lifted geodesic $\tilde{\gamma}$ is of the form $\tilde{\gamma}(\R)=\{(0,r) \in \R^2 \ | \  r> 0\}$. In particular, there exists an isometry $f\in \Gamma$ so that $f(\tilde{\gamma}(t)) = \tilde{\gamma}(t+\ell) $ for every $t \in \R$ (see, for example, \cite{Farb}). Since $f$ fixes $(0,1)$ and the point at infinity, we deduce that $f(x,y)=e^{\ell_g(\gamma)} (x,y)$, thus
		\[ \ell_g(\gamma) = \int_1^{e^{\ell_g(\gamma)}} \frac{\mathrm{d}y}{y}. \]
		Let $\gamma_b: \R \to M$ be the unique closed $(g,b)$-geodesic with free homotopy class $\nu$. Notice that we have $\tilde{\gamma}_b(\R)=\{\left( rb, r(1-b^2)\right) \ | \ r>0 \}$ and ${f( b, 1-b^2) = \left( e^{\ell_g(\gamma)}b, e^{\ell_g(\gamma)}(1-b^2)\right)}$, hence
		\[ \ell_g(\gamma_b) = \int_{1}^{e^{\ell_g(\gamma)}} \frac{\mathrm{d}y}{y\sqrt{1-b^2}} = \frac{\ell_g(\gamma)}{\sqrt{1-b^2}}.\]
		The result follows. \qedhere
	\end{proof}
	
	
	
	\begin{figure}[H]
		\centering
		\input{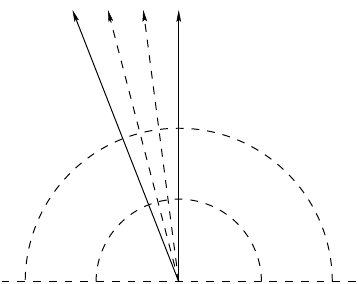_t}
		\caption{An example showing how $\tilde{\gamma}$ moves as $b$ changes.}
		\label{fig:magnetic_3}
	\end{figure}

	Next, we show how the assumption on the cohomology classes in Theorem ~\ref{thm:main1} cannot be removed.
	
	\begin{lem} \label{lem:intensity}
		Let $M$ be a closed, connected, oriented surface, and let $g$ be a hyperbolic metric on $M$. We have ${\MLS(g,b) = \MLS(g,-b)}$ for every constant $|b| < 1$.
	\end{lem}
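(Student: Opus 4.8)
The stated identity is immediate from Lemma~\ref{lem:area}: applying that lemma with intensity $-b$ and using $(-b)^2=b^2$,
\[
\MLS(g,-b) = \bigl(1-(-b)^2\bigr)^{-1/2}\MLS(g) = (1-b^2)^{-1/2}\MLS(g) = \MLS(g,b),
\]
so there is nothing to prove beyond citing the previous lemma. It is nevertheless worth recording the geometric reason, since it also gives a proof independent of the explicit formula and, suitably pushed, yields the conjugacy statement mentioned in the Introduction.

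I would argue directly on the universal cover $\mathbb{H}$. The Euclidean reflection $\iota\colon(x,y)\mapsto(-x,y)$ is an orientation-\emph{reversing} isometry of $\widetilde g$. Because the complex structure $i$ changes sign under an orientation-reversing isometry while $b$ is constant, a one-line check on \eqref{eqn:magnetic_defn} shows that $\iota$ carries $(g,b)$-geodesics to $(g,-b)$-geodesics (with the same parameter, no time reversal). Fix $\nu\in\pi_1(M)$; choosing coordinates as in the proof of Lemma~\ref{lem:area}, the lift of the closed $(g,b)$-geodesic in $\nu$ is the ray $\{(rb,r(1-b^2)):r>0\}$, invariant under the hyperbolic translation $f(x,y)=e^{\ell_g(\gamma_\nu)}(x,y)$. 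Since $\iota$ commutes with $f$ and maps that ray isometrically onto $\{(-rb,r(1-b^2)):r>0\}=\{(r(-b),r(1-(-b)^2)):r>0\}$, the latter is the lift of the closed $(g,-b)$-geodesic in the \emph{same} class $\nu$ and has the same length. As $\nu$ was arbitrary, $\MLS(g,b)=\MLS(g,-b)$.

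For the displayed identity there is really no obstacle. The point that needs care — and the reason the hyperbolic hypothesis is imposed — is the stronger assertion that $\phi_{g,b}^t$ and $\phi_{g,-b}^t$ are smoothly conjugate by a diffeomorphism isotopic to the identity. The map $d\iota$ conjugates the two flows on $S_g\mathbb{H}$ with the correct time direction, but $\iota$ need not normalize $\Gamma$, so $d\iota$ does not descend to $S_gM$. To remedy this I would use the identification $S_g\mathbb{H}\cong\mathrm{PSL}(2,\R)$, under which the generator $X^g+bV^g$ of $\phi_{g,b}^t$ is a fixed element of $\mathfrak{sl}(2,\R)$; since $|b|<1$ it is $\R$-split with eigenvalues $\pm\sqrt{1-b^2}$, hence $\mathrm{PSL}(2,\R)$-conjugate to the generator $X^g-bV^g$ of $\phi_{g,-b}^t$. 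Conjugation by the relevant group element induces a right translation $R_a$ of $\Gamma\backslash\mathrm{PSL}(2,\R)$ which \emph{does} descend, conjugates $\phi_{g,-b}^t$ to $\phi_{g,b}^t$, and — as $\mathrm{PSL}(2,\R)$ is connected — is isotopic to the identity. The one step requiring genuine verification is that $X^g+bV^g$ and $X^g-bV^g$ lie in the same $\mathrm{PSL}(2,\R)$-adjoint orbit and not merely the same $\mathrm{PGL}(2,\R)$-orbit; this holds because two regular split elements of $\mathfrak{sl}(2,\R)$ with equal eigenvalues are always $\mathrm{SL}(2,\R)$-conjugate.
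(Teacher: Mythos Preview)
Your proposal is correct. For the bare statement of the lemma you take a genuinely shorter route than the paper: the equality $\MLS(g,b)=\MLS(g,-b)$ is indeed an immediate consequence of Lemma~\ref{lem:area}, since that formula depends only on $b^2$. The paper does not make this observation; instead it proves the lemma by first establishing the stronger fact --- advertised in the Introduction as the actual reason this example matters --- that $\phi_{g,b}^t$ and $\phi_{g,-b}^t$ are smoothly conjugate by a map isotopic to the identity, and then reads off the MLS equality. For that stronger statement your third paragraph follows essentially the same strategy as the paper: both use the identification $S_gM\cong\Gamma\backslash\mathrm{PSL}(2,\R)$ and produce the conjugacy as a right translation. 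The difference is one of concreteness: the paper writes down the explicit generator $X^{g,b}=\tfrac12\left(\begin{smallmatrix}1&b\\-b&-1\end{smallmatrix}\right)$ and an explicit conjugating matrix $c=\tfrac{1}{1-b^2}\left(\begin{smallmatrix}1&-b\\-b&1\end{smallmatrix}\right)$ together with an explicit isotopy $c_s$, whereas you argue abstractly that $X^g\pm bV^g$ are regular $\R$-split with the same eigenvalues and hence lie in the same $\mathrm{SL}(2,\R)$-adjoint orbit. Your reflection argument in the second paragraph is a pleasant geometric explanation that the paper does not include; it recovers the MLS equality directly but, as you correctly note, does not by itself yield a conjugacy on $S_gM$ since $\iota$ need not normalize $\Gamma$.
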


	\begin{proof}[Proof of Lemma ~\ref{lem:intensity}]
		Define
		\[ X^{g,b} \coloneqq \frac{1}{2}\begin{pmatrix} 1 & b \\ -b & -1 \end{pmatrix}. \]
		It is well-known that there is a cocompact lattice $\Gamma$ in $\text{PSL}(2,\R)$ such that the magnetic flow associated to $(g,b)$ on $S_gM$ can be interpreted as the flow on $\Gamma \backslash \text{PSL}(2,\R)$ given by $\phi^{g,b}_t(\Gamma q) \coloneqq \Gamma q e^{tX^b}$ (see, for example, \cite{paternain2006magnetic}). Observe that the matrix $c \in \text{SL}(2,\R)$ given by
		\[ c \coloneqq \frac{1}{1-b^2} \begin{pmatrix} 1 & -b \\ -b & 1 \end{pmatrix}\]
		satisfies the condition that $X^{g,b} c = c X^{g,-b}$. Thus, an easy argument using induction and the definition of the exponential map tells us that if we define the map $h : \Gamma \backslash \text{PSL}(2,\R) \rightarrow \Gamma \backslash \text{PSL}(2,\R)$ by $h(\Gamma q) \coloneqq \Gamma q c,$
		then we have $h \circ \phi_{g,b}^t = \phi_{g,-b}^t \circ h$. This is a smooth conjugacy between the magnetic flows for $(g,b)$ and $(g,-b)$. Furthermore, if we define a family of matrices $c_s \in \text{SL}(2,\R)$ by
		\[ c_s \coloneqq \frac{1}{1-sb^2} \begin{pmatrix} 1 & -sb \\ -sb & 1 \end{pmatrix}, \]
		then we have an associated family of maps $h_s : \Gamma \backslash \text{PSL}(2,\R) \rightarrow \Gamma \backslash \text{PSL}(2,\R)$ given by  $h_s(\Gamma q) \coloneqq \Gamma q c_s$.
		Observe that the family is continuous in $s$ and satisfies $h_0 = \text{Id}$ and $h_1 = h$, so $h$ is homotopic to the identity. As discussed in \cite[Section 10.3]{burns1985manifolds} and \cite[Section 3]{croke1990rigidity}, the subgroup of $\pi_1(SM)$ generated by the kernel of the footprint map equals the center of $\pi_1(SM)$.
		Since $h$ is invertible, we have that $h_*$ is an isomorphism on the level of the fundamental groups, hence induces an isomorphism on the level of $\pi_1(M)$. This shows that $\MLS(g,b) = \MLS(g,-b)$.
	\end{proof}
	
	Finally, we construct an example of a closed, oriented, connected surface such that there exists an exact Anosov magnetic system $(g,b)$ where the metric $g$ is not Anosov by modifying the arguments in \cite[Section 7]{burns2002anosov}. This shows that it is not necessary to assume that $g$ is Anosov in Corollary ~\ref{cor:main}. We recall the following lemma, and briefly sketch its proof. For more details, we point the reader to \cite[Sections 6 and 7]{burns2002anosov}.
	
	\begin{lem} \label{lem:burns_example}
		There exists a closed, connected, orientable surface $M$ and an Anosov magnetic system $(g, b)$ on $M$ with $b$ constant such that $g$ is not Anosov.
	\end{lem}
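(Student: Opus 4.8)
The plan is to adapt the construction of Burns and Paternain in \cite[Sections~6--7]{burns2002anosov} to the case of a \emph{constant} magnetic intensity. Fix a closed orientable surface $M$ of genus at least two, a hyperbolic metric $g_0$ on $M$, and a simple closed $g_0$-geodesic $\alpha$. In Fermi coordinates $(\rho,\theta)$ on a collar $C \cong (-2\ell,2\ell) \times (\mathbb{R}/L\mathbb{Z})$ of $\alpha$, write $g_0 = \mathrm{d}\rho^2 + f_0(\rho)^2\,\mathrm{d}\theta^2$ (so $f_0$ is even and $f_0'(0)=0$), and replace $f_0$ by an even function $f$ that is \emph{constant} on $\{|\rho| \le \ell\}$ and equals $f_0$ near $|\rho| = 2\ell$; leaving $g_0$ unchanged outside $C$ produces a smooth metric $g$ that is flat on the strip $\{|\rho| < \ell\}$ and coincides with $g_0$ (hence has $K^g \equiv -1$) outside $C$. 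Since $\alpha = \{\rho = 0\}$ is still a closed $g$-geodesic and remains in the flat strip for all time, the transverse part of the linearized geodesic flow along the orbit of $(\alpha,\dot\alpha)$ grows only polynomially; hence that orbit is not hyperbolic and $\varphi^t_g$ is not Anosov.

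\emph{Choosing $b$ and isolating the bad set.} Take $b$ to be a nonzero constant with $|b| < 1$, and (shrinking $\ell$ if needed) assume $\ell < 1/|b|$. Because $K^g \equiv -1$ outside $C$, the set $B \coloneqq \{x \in M : K^g(x) + b^2 \ge 0\}$, on which the magnetic curvature $\K^{g,b}$ is non-negative, is compactly contained in $C$; by making the transition of $g$ from the flat strip to the hyperbolic region within $C$ sufficiently abrupt we may further assume $B \subseteq \{|\rho| \le \rho_b\}$ for some $\ell \le \rho_b < 1/|b|$. The crucial geometric input is that $\varphi^t_{g,b}$ has \emph{no} complete orbit whose projection stays in $\{|\rho| \le \rho_b\}$: along any unit-speed $(g,b)$-geodesic $\gamma(t) = (\rho(t),\theta(t))$ inside $C$ the quantity $f(\rho)^2\dot\theta - b\,\Phi(\rho)$ is conserved (with $\Phi' = f$), and combined with $\dot\rho^2 + f(\rho)^2\dot\theta^2 = 1$ this shows that, as long as $\gamma$ stays in the flat strip, $\rho$ undergoes simple harmonic motion of amplitude $1/|b| > \ell$, forcing $\gamma$ to leave $\{|\rho| < \ell\}$; a short analysis of the (monotone, or single-turning-point) behaviour of $\rho$ on the thin annuli $\ell \le |\rho| \le \rho_b$ upgrades this to the full claim. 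By compactness this gives a uniform bound $T_* > 0$ on the length of any $(g,b)$-geodesic arc contained in $\{|\rho| \le \rho_b\}$ and, again by compactness, a uniform $\delta > 0$ together with a uniform lower bound on the fraction of time every orbit spends in $\{K^g + b^2 \le -\delta\}$.

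\emph{Anosov property.} Since $b$ is constant, the transverse linearized dynamics of $\varphi^t_{g,b}$ is governed by the scalar equation $\ddot y + (K^g + b^2)\,y = 0$, and in the corresponding coordinates $(y,\dot y)$ one computes $\tfrac{\mathrm{d}}{\mathrm{d}t}(y\dot y) = \dot y^2 - (K^g + b^2)\,y^2$. Outside $B$ this equals $\dot y^2 + |K^g + b^2|\,y^2 > 0$, so the cone $\{y\dot y > 0\}$ is strictly forward invariant there, with exponential expansion of rate $\sqrt{1-b^2}$ on $M \setminus C$ (the standard negatively-curved Jacobi estimate); inside $B$ the same quantity can decrease, but only at rate at most $b^2 y^2$ and for total time at most $T_*$ per visit, so each excursion distorts cones by a bounded factor. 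Combining this with the uniform time-fraction estimate from the previous step, one constructs a continuous cone field on $S_gM$ that is strictly invariant after a uniform time, whence $\varphi^t_{g,b}$ is Anosov by the usual cone-field criterion.

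The main obstacle is the last step: the expansion rate $\sqrt{|K^g + b^2|}$ degenerates to $0$ at $\partial B$, so one must rule out orbits that repeatedly graze $\partial B$ — accumulating many short excursions separated by short, weakly expanding segments — and still extract net uniform hyperbolicity. Carrying out this quantitative bookkeeping is precisely the content of \cite[Sections~6--7]{burns2002anosov}, which we follow; accordingly, we give only the sketch above.
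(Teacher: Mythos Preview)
Your proposal is correct and follows the same overall strategy as the paper: both are sketches of the Burns--Paternain construction from \cite[Sections~6--7]{burns2002anosov}, and both defer the delicate quantitative bookkeeping to that reference. The specific choices differ, however. You modify a hyperbolic metric by inserting a \emph{flat} strip around a simple closed geodesic, so that the corresponding periodic orbit of $\varphi_g^t$ is parabolic; the paper instead works on a rotationally symmetric cylinder $\R\times S^1$ with a warping function $u\approx\tanh$ perturbed near $s=0$ so that $K^g>0$ there, producing a closed geodesic through \emph{positive} curvature, and then compactifies. For the no-trapping step you use the Clairaut-type first integral $f(\rho)^2\dot\theta \pm b\,\Phi(\rho)$ and the explicit circular motion in the flat strip, whereas the paper compares the $g$-geodesic curvature of magnetic orbits (which equals $b=1/2$) with that of the parallels (bounded by $1/4$) to force transversality. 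Finally, you phrase the hyperbolicity criterion as a cone-field argument for the scalar Jacobi equation $\ddot y+(K^g+b^2)y=0$, while the paper invokes the equivalent Riccati criterion (Lemma~\ref{lem:exp_dichotomy}). Your version has the mild advantage of staying on a compact surface from the outset and avoiding the compactification step; the paper's version makes the curvature bounds ($\K^{g,b}\le 1/2$ on $C_\delta$, $\le -3/4$ outside) more explicit.
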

	
	\begin{proof}[Sketch of Proof]
		
		We will need the following.
		
		\begin{lem}[{\cite[Lemma 6.1]{burns2002anosov}}] \label{lem:exp_dichotomy}
			Let $(g,b)$ be a magnetic system on $M$. If there are constants $T > 0$ and $H > 1$ such that for any $(g,b)$-geodesic $\gamma$ and any solution $u$ to the \emph{magnetic Riccati equation}
			\begin{equation} \label{eqn:riccati} \dot{u}(t) + u^2(t) + K^{g,b}(\gamma(t), \dot{\gamma}(t)) = 0\end{equation}
			with $u(0) \geq 0$, we have $1/H \leq u(T) \leq H$, then the magnetic system is Anosov.
		\end{lem}
		
		Consider a rotationally symmetric surface $N=\R \times S^1$ with coordinates $(s,\theta)$. For a smooth function $u:\R \to \R$, equip $N$ with the following Riemannian metric:
		\[g_u = \mathrm{d}s^2 + \mathrm{exp}\left(\int_0^s u(\tau)\mathrm{d}\tau \right)^{2}\mathrm{d}\theta^2.\]
		Note that the Gaussian curvature is given by the formula 
		\[{K^{g_u}(s,\theta) =  -u'(s) - u^2(s)},\] 
		where $'$ denotes the derivative with respect to the $s$ coordinate. For $s\in \R$ the unit speed parallel 
		\[\alpha_s^{\pm}(t) = \left(s, \pm \mathrm{exp}\left(\int_0^s u(\tau)\mathrm{d}\tau \right)^{-1} t \right)\] 
		has $g_u$-geodesic curvature $\mathrm{k}^{g_u}(t)=\pm u(s) $. In particular, $\alpha_s^\pm$ is a closed $g_u$-geodesic if and only if $u(s)=0$ \cite[Lemma 6.2]{burns2002anosov}. For $\delta > 0$ small, consider the interval $[-\delta, \delta] \subset [-1/4,1/4]$. Let $u(s) = \mathrm{tanh}(s)$, except for a small $C^0$ perturbation in the interval $[-\delta, \delta]$ where the function $u$ satisfies the following:
		\begin{itemize}
			\item $|u(s)| < \mathrm{tanh}(1/4)$ and $K^{g_u}(s) < 1/4$, $\forall s \in [-\delta, \delta]$.
			\item $u(0) = 0$ and $u'(0)<0$.
		\end{itemize}
		With this choice of $u$, we can immediately deduce that 
		$\alpha_0^\pm(t) = (0,t)$ is a closed $g_u$-geodesic and $K^{g_u}(\gamma_0(t)) >0$. 
		
		Denote by $(M,\bar{g}_u)$ the Riemannian surface obtained through a standard compactification of $(N,g_u)$ -- see, for instance, \cite{ballmann1987surfaces}. By construction, the metric $\bar{g}_u$ carries a closed geodesic contained in a region with positive curvature, hence the geodesic flow cannot be Anosov. Consider now the magnetic system $(g_u, 1/2)$ on $N$, and observe that the magnetic curvature is given by $K^{g_u, 1/2}(s) = K^{g_u}(s) + 1/4$. In particular, note that $K^{g_u, 1/2}$ is at most $1/2$ in the region $C_{\delta} = [-\delta,\delta] \times S^1$, and at most $-3/4$ outside $C_{\delta}$. Observe also that, due to the choice of $u$, for $s\in [-1/4,1/4]$ the $g_u$-geodesic curvature of a parallel is bounded in absolute value by $1/4$ and unit speed $(g_u, 1/2)$-geodesics have prescribed $g_u$-geodesic curvature equal to $1/2$. Thus, each unit speed $(g_u, 1/2)$-geodesic connected segment in the region $C_{1/4} = [-1/4,1/4] \times S^1$ can be tangent to a parallel $\alpha^{\pm}_s$ at most once, and cannot be asymptotic to $\alpha^{\pm}_s$. Moreover, \cite[Lemma 7.1]{burns2002anosov} establishes that the amount of time unit speed $(g_u, 1/2)$-geodesics spend in $C_\delta$ is bounded by $2t_\delta$ for a constant $t_\delta > 0$. Therefore, every $(\bar{g}_u, 1/2)$-geodesic on $M$ is contained in a region with negative magnetic curvature equal to $-3/4$, except on intervals with length bounded by $2t_\delta$ where the curvature is bounded by 1/2. Consider the \emph{magnetic Jacobi equation} along a $(\bar{g}_u, 1/2)$-geodesic $\gamma$: 
		\begin{equation}\label{jacobi}
			\ddot{y} + 
			K^{g_u, 1/2}y =0. \end{equation} Functions of the form $u(t) = \dot{y}(t)/y(t)$, where $y(t)$ is the unique solution of \eqref{jacobi} with initial condition $y(0)\neq 0$, are examples of solutions to \eqref{eqn:riccati} along $\gamma$. It is easy to check that for this choice of parameters, the assumptions of Lemma ~\ref{lem:exp_dichotomy} are satisfied for $T = 1/8$ and $H > 0$ depending on $t_\delta$, hence the magnetic flow associated to $(\bar{g}_u, 1/2)$ is Anosov. 	\end{proof}
	

	The goal is to now modify this argument in order to derive the following.

	\begin{lem} \label{lem:upgraded_burns_example}
		There exists a closed, connected, orientable surface $M$ and an exact Anosov magnetic system $(g, b)$ on $M$ such that $g$ is not Anosov.
	\end{lem}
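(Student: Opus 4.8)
The strategy is to leave the metric $g$ of Lemma~\ref{lem:burns_example} and the delicate part of that construction completely intact, and only alter the magnetic intensity on the region of $M$ where the dynamics is harmless, so as to make its integral over $M$ vanish. Recall from the proof of Lemma~\ref{lem:burns_example} that $M$ is obtained by gluing a truncated rotationally symmetric tube $T$ --- the compact cylinder carrying the positive-curvature bump and the closed geodesic $\alpha_0^\pm$ --- into a surface $W \coloneqq M \smallsetminus T$ on which $g$ is hyperbolic, and that with $b \equiv 1/2$ the magnetic curvature $K^{g,1/2}$ is positive only on the small subcylinder $C_\delta \subset T$, where $(g,1/2)$-geodesics have sojourn time at most $2t_\delta$, while $K^{g,1/2} \le -3/4$ on all of $M \smallsetminus C_\delta$; this is exactly what makes Lemma~\ref{lem:exp_dichotomy} applicable. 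Two features of the construction will be used: first, we may build $M$ so that $W$ (with a fixed hyperbolic metric of curvature $\le -1$) has arbitrarily large area and diameter, e.g.\ by taking its genus large; second, for any such choice $g$ fails to be Anosov because of $\alpha_0^\pm$, independently of the magnetic intensity.

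Fix a collar $\mathcal{C}$ of $\partial T$ contained in $W$ and keep $b \equiv 1/2$ on it; this collar lies in the region where $K^g = -1$, so there $K^{g,b} = K^g + 1/4 \le -3/4$. On the remaining region $W' \coloneqq W \smallsetminus \mathcal{C}$, let $b$ decrease smoothly and monotonically (as a function of the distance to $\partial W'$, say) from $1/2$ near $\partial W'$ to a negative constant $\beta$ in the interior, keeping $-1/2 \le b \le 1/2$ throughout; carrying out the transition over the full diameter of $W'$ gives $\|{\mathrm d}b\|_g \lesssim (1/2 - \beta)/\operatorname{diam}(W')$. Choose $\beta < 0$ so that
\[ \int_M b\,\Omega_g \;=\; \tfrac12\,\operatorname{Area}(T \cup \mathcal{C}) \;+\; \int_{W'} b\,\Omega_g \;=\; 0 ; \]
the first term is a fixed positive number while $\operatorname{Area}(W')$ is as large as we wish, so $|\beta|$ can be taken arbitrarily small, and then $\operatorname{diam}(W')$ large makes $\|{\mathrm d}b\|_g$ arbitrarily small as well. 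Arranging $\|{\mathrm d}b\|_g < 1/4$, we get on $W$
\[ K^{g,b} \;=\; K^g - {\mathrm d}_x b(iv) + b^2 \;\le\; -1 + \|{\mathrm d}b\|_g + \tfrac14 \;\le\; -\tfrac12 \;<\; 0 , \]
while by construction $[b\,\Omega_g] = 0$ in $H^2(M;\R) \cong \R$, i.e.\ $(g,b)$ is exact.

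It remains to see that $(g,b)$ is Anosov. On $T$ nothing has changed from Lemma~\ref{lem:burns_example}, so the positive magnetic curvature is confined to $C_\delta$ with sojourn time $\le 2t_\delta$, while on $W$ we have just shown $K^{g,b} \le -1/2$. Hence along any $(g,b)$-geodesic a solution of the magnetic Riccati equation~\eqref{eqn:riccati} with $u(0) \ge 0$ behaves exactly as in the proof of Lemma~\ref{lem:burns_example}: it is uniformly controlled on every subinterval spent in $W$ or in $T \smallsetminus C_\delta$, and the total time spent in the only bad set $C_\delta$ inside any fixed window is $\le 2t_\delta$. Splicing these estimates as there, the hypotheses of Lemma~\ref{lem:exp_dichotomy} hold for suitable $T>0$ and $H>1$, so the magnetic flow of $(g,b)$ is Anosov; as $g$ is not Anosov, this proves the lemma. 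The only genuinely new point --- and the main thing to be careful about --- is that the modification, confined to $W$ where we have forced $K^{g,b}$ to stay strictly negative, introduces no new mechanism for non-hyperbolicity: the bad set $C_\delta \subset T$ and the sojourn-time bound that drive Lemma~\ref{lem:burns_example} are untouched. The remaining subtlety is purely the interlocking choice of the genus of $W$, the width of the transition, and the constant $\beta$, which are mutually compatible precisely because enlarging $W$ simultaneously drives $|\beta| \to 0$ and permits an arbitrarily gentle transition, so that $|b| \le 1/2$, $\|{\mathrm d}b\|_g < 1/4$, and $\int_M b\,\Omega_g = 0$ can all be arranged at once.
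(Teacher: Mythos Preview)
Your argument is correct and gives a genuinely different construction from the paper's. The paper first replaces the constant magnetic intensity $1/2$ by a compactly supported bump $b_+$ on a lengthened tube (with the metric also adjusted so that $K^g = -1-\epsilon$ far out, and $\|{\mathrm d}b_+\|_\infty + b_+^2 < 1/4 + \epsilon$ so that $K^{g,b_+}<0$ off $C_\delta$ while the sojourn-time argument on $C_{1/4}$ survives), then takes \emph{two} copies of the resulting closed surface, glues them along hyperbolic regions, and places $b_+$ on one copy and $b_- = -b_+$ on the other; exactness then falls out by symmetry. Your route avoids the doubling: you keep $b \equiv 1/2$ on the tube $T$ and balance $\int_M b\,\Omega_g$ by letting $b$ dip to a small negative constant on an enlarged hyperbolic complement $W$, using the freedom in the genus of $W$ to make both $|\beta|$ and $\|{\mathrm d}b\|_g$ small enough that $K^{g,b}<0$ persists on $W$. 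Both arguments rest on the same local observation --- since $b=1/2$ on $C_{1/4}$, the magnetic geodesics and the sojourn-time bound there are literally those of Lemma~\ref{lem:burns_example} --- and both feed into the same Riccati criterion (Lemma~\ref{lem:exp_dichotomy}). The paper's symmetry trick makes exactness transparent without any area bookkeeping; your approach is more economical (one surface, no gluing) at the cost of juggling the interlocking choices you describe. One minor imprecision: the gradient bound on $b$ is governed by the inradius of $W'$ (or the length of a collar over which the transition is carried out) rather than its diameter, but this is easily arranged, e.g.\ by including a long hyperbolic collar in $W$.
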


	We briefly sketch the idea of the proof. The first step is to modify Lemma ~\ref{lem:burns_example} in order to allow a magnetic intensity $b_+$ which is a small positive bump function. In particular, this will show that any  $(g,b_+)$-geodesic must spend a small amount of time in the ``bad region,'' and we can shrink this amount of time as necessary. A symmetric argument will also show that one can use the magnetic intensity $b_- = -b_+$, and any $(g, b_-)$ geodesic will also spend the same amount of time in the ``bad region.'' By gluing together these examples and taking the magnetic intensity $b \coloneqq b_+ + b_-$ (note that these are supported on two disjoint regions), we see that any magnetic geodesic can only spend a small amount of time in the ``bad regions,'' and adjusting parameters can shrink this amount of time to be arbitrarily small. As long as we take $T$ sufficiently large depending on the distance between the ``bad regions,'' as well as their size, then we can find an $H$ where Lemma ~\ref{lem:exp_dichotomy} applies. Moreover, since these ``bad regions,'' have the same size, we see that the average of $b$ with respect to the area is zero, and hence this gives an example of an exact Anosov magnetic system whose underlying metric is not Anosov. We now present the details.

	\begin{proof}[Proof of Lemma ~\ref{lem:upgraded_burns_example}]
		We use the same notation as in the sketch of Lemma ~\ref{lem:burns_example}. The goal is to construct a magnetic intensity which is compactly supported on $N$, and whose growth is small. This will allow us to control the magnetic curvature as in the previous argument. 
		Fix $\epsilon > 0$ sufficiently small, and let $\delta_2 >\delta_1 \gg 1/4$. Let $v$ be a $C^0$ perturbation of $u$ such that $v$ coincides with $u$ on the interval $[-\delta_1, \delta_1]$, $K^{g_v}(s) \leq 1 $ for $|\delta_1|\leq|s|\leq |\delta_2|$ and  $K^{g_v}(s) = -1 - \epsilon $ for $|s|>\delta_2$.  
		
		\begin{figure}[H]
			\centering
				\input{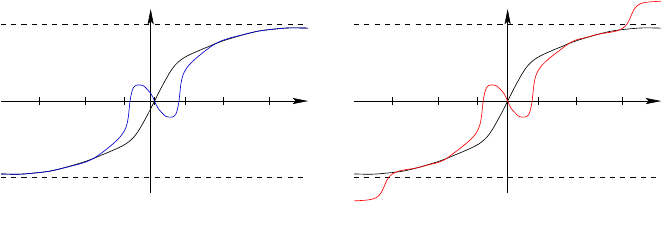_t}
			\caption{The graphs of the functions $u$ and $v$, each sketched against the graph of $\tanh$.}
			\label{fig:magnetic_1}
		\end{figure}
		
		Let $\delta_4\gg \delta_3 \gg \delta_2$. Let $b_+$ be a bump function with the following properties:
		\begin{itemize}
			\item $b_+$ is supported in $[-\delta_4,\delta_4]$
			\item $0\leq b_+ \leq 1/2$ and $b_+\equiv 1/2$ on $[-\delta_2, \delta_2]$,
			\item $\|\mathrm{d}b_+\|_{\infty} + b_+^2 < 1/4 + \epsilon$.
		\end{itemize}

		\begin{figure}[H]
			\centering
			\input{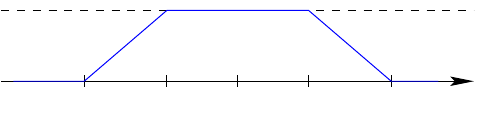_t}
			\caption{An example of $b_+$.}
			\label{fig:magnetic_2}
		\end{figure}

		The magnetic curvature associated to $(g_v,b_+)$ satisfies the property that $K^{g,b_+}(s) = K^{g,1/2} \leq 1/2 $ for $s\in [-\delta, \delta]$, and $K^{g,b_+}(s) \leq -3/4$ otherwise. By construction, the dynamics of $(g_v,b_+)$ restricted on $C_{1/4}$ coincides with the dynamics of $(g,1/2)$. Involving the same argument as in Lemma ~\ref{lem:burns_example}, we can conclude that the time any $(g_v, b_+)$-geodesic spends in $C_\delta$ is bounded above by $2t_\delta$ for a constant $t_\delta > 0$ depending on the parameters. Let $b_- \coloneqq - b_+$. By symmetry, the time any $(g_v, b_-)$-geodesic spends in $C_\delta$ is bounded above by $2t_\delta$ for the same constant $t_\delta > 0$. For both of these examples, preform the same compactification procedure to get a closed, connected, oriented surface, and two magnetic systems $(\bar{g}_v, b_{\pm})$.
		
		Away from the neighborhoods $C_\delta$, we take two open sets and glue together two copies of $(M, \bar{g}_v)$ along these open sets to get $\bar{M} = \left[ M \cup M \right]_{g_v}$. Equip this new surface $\bar{M}$ with the magnetic system $(\bar{g}_v, b)$, where $b \coloneqq b_+ + b_-$. By construction, the time each unit speed $(g,b)$-geodesic connected segment spends in the region corresponding through the glue to $C_{\delta}$ is still bounded by $t_{\delta}$. Outside those regions, the magnetic curvature is negative and still bounded by $-3/4$, and thus by taking $T$ sufficiently large depending on the distance between the regions $C_\delta$, and taking $H$ depending on $t_\delta$ and this distance, one can conclude from Lemma ~\ref{lem:exp_dichotomy} that the corresponding flow is Anosov. Moreover, we have 	
		\[\int_{\bar{M}} b \Omega_{g} = 2\pi \left[ \int_{-\delta_4}^{\delta_4} b_+ \mathrm{d}s +\int_{-\delta_4}^{\delta_4} b_- \mathrm{d}s  \right] = 0, \]
		hence the system is exact. 
	\end{proof}

	\bibliography{MMLS}

\begin{thebibliography}{10}

\bibitem{anosov}
Dmitrij Anosov.
\newblock Geodesic flows on closed {R}iemannian manifolds with negative
  curvature.
\newblock {\em Steklov Inst. Math. Amer. Math. Soc}, 90, 1967.

\bibitem{anosov1967some}
Dmitrij Anosov and Yakov Sinai.
\newblock Some smooth ergodic systems.
\newblock {\em Russian Mathematical Surveys}, 22(5):103, 1967.

\bibitem{arnold2009some}
Vladimir Arnold.
\newblock Some remarks on flows of line elements and frames.
\newblock {\em Collected Works: Representations of Functions, Celestial
  Mechanics and KAM Theory, 1957--1965}, pages 230--232, 2009.

\bibitem{assenza2023magnetic}
Valerio Assenza.
\newblock Magnetic curvature and existence of closed magnetic geodesics on low
  energy levels.
\newblock {\em arXiv preprint arXiv:2309.03159}, 2023, (to appear in ``Int.
  Math. Res. Not.'').

\bibitem{IVJ}
Valerio Assenza, James~Marshall Reber, and Ivo Terek.
\newblock Magnetic flatness and {E}. {H}opf's theorem for magnetic systems.
\newblock {\em arXiv preprint arXiv:2404.17726}, 2024 (to appear in
  ``Communications in Mathematical Physics'').

\bibitem{bahri1998periodic}
Abbas Bahri and Iskander Taimanov.
\newblock Periodic orbits in magnetic fields and {R}icci curvature of
  {L}agrangian systems.
\newblock {\em Transactions of the American Mathematical Society},
  350(7):2697--2717, 1998.

\bibitem{ballmann1987surfaces}
Werner Ballmann, Michael Brin, and Keith Burns.
\newblock On surfaces with no conjugate points.
\newblock {\em Journal of Differential Geometry}, 25(2):249--273, 1987.

\bibitem{BCG}
G{\'e}rard Besson, Gilles Courtois, and Sylvestre Gallot.
\newblock Entropies et rigidit{\'e}s des espaces localement sym{\'e}triques de
  courbure strictement n{\'e}gative.
\newblock {\em Geometric and Functional Analysis}, 5:731--799, 1995.

\bibitem{bott2013differential}
Raoul Bott and Loring Tu.
\newblock {\em Differential forms in algebraic topology}, volume~82.
\newblock Springer Science \& Business Media, 2013.

\bibitem{burns1985manifolds}
Keith Burns and Anatole Katok.
\newblock Manifolds with non-positive curvature.
\newblock {\em Ergodic Theory and Dynamical Systems}, 5(2):307--317, 1985.

\bibitem{burns2002anosov}
Keith Burns and Gabriel Paternain.
\newblock Anosov magnetic flows, critical values and topological entropy.
\newblock {\em Nonlinearity}, 15(2):281, 2002.

\bibitem{solly}
Solly Coles and Richard Sharp.
\newblock Helicity, linking and the distribution of null-homologous periodic
  orbits for anosov flows.
\newblock {\em Nonlinearity}, 36(1):21, 2022.

\bibitem{contreras2000palais}
Gonzalo Contreras, Renato Iturriaga, Gabriel Paternain, and Miguel Paternain.
\newblock The {P}alais-{S}male condition and {M}a{\~n}{\'e}'s critical values.
\newblock In {\em Annales Henri Poincar{\'e}}, volume~1, pages 655--684.
  Springer, 2000.

\bibitem{croke1990rigidity}
Christopher Croke.
\newblock Rigidity for surfaces of non-positive curvature.
\newblock {\em Commentarii Mathematici Helvetici}, 65:150--169, 1990.

\bibitem{crokedairbekov}
Christopher Croke and Nurlan Dairbekov.
\newblock Lengths and volumes in {R}iemannian manifolds.
\newblock {\em Duke Mathematical Journal}, 125(1), 2004.

\bibitem{croke1992marked}
Christopher Croke, Albert Fathi, and Joel Feldman.
\newblock The marked length-spectrum of a surface of nonpositive curvature.
\newblock {\em Topology}, 31(4):847--855, 1992.

\bibitem{paternain2005longitudinal}
Nurlan Dairbekov and Gabriel Paternain.
\newblock Longitudinal {KAM}-cocycles and action spectra of magnetic flows.
\newblock {\em Mathematical Research Letters}, 12(5-6):719--729, 2005.

\bibitem{javier}
Javier Echevarr{\'\i}a~Cuesta.
\newblock Smooth orbit equivalence rigidity for dissipative geodesic flows.
\newblock {\em arXiv e-prints}, pages arXiv--2406, 2024.

\bibitem{Farb}
Benson Farb and Dan Margalit.
\newblock {\em A Primer on Mapping Class Groups}, volume~41 of {\em PMS-49}.
\newblock Princeton University Press, Princeton, NJ, 2011.

\bibitem{feldman}
Jacob Feldman and Donald Ornstein.
\newblock Semi-rigidity of horocycle flows over compact surfaces of variable
  negative curvature.
\newblock {\em Ergodic Theory and Dynamical Systems}, 7(1):49--72, 1987.

\bibitem{ghys1984flots}
{\'E}tienne Ghys.
\newblock Flots d'{A}nosov sur les 3-vari{\'e}t{\'e}s fibr{\'e}es en cercles.
\newblock {\em Ergodic Theory and Dynamical Systems}, 4(1):67--80, 1984.

\bibitem{gogolev2020abelian}
Andrey Gogolev and Federico~Rodriguez Hertz.
\newblock Abelian {L}ivshits theorems and geometric applications.
\newblock {\em Vision for Dynamics in the 21st Century: the Legacy of Anatole
  Katok}, 2022.

\bibitem{gogolev2022smooth}
Andrey Gogolev and Federico Rodriguez~Hertz.
\newblock Smooth rigidity for 3-dimensional volume preserving {A}nosov flows
  and weighted marked length spectrum rigidity.
\newblock {\em arXiv preprint arXiv:2210.02295}, 2022.

\bibitem{gouda}
Norio Gouda.
\newblock Magnetic flows of {A}nosov type.
\newblock {\em Tohoku Mathematical Journal, Second Series}, 49(2):165--183,
  1997.

\bibitem{grognet1999flots}
St{\'e}phane Grognet.
\newblock Flots magn{\'e}tiques en courbure n{\'e}gative.
\newblock {\em Ergodic Theory and Dynamical Systems}, 19(2):413--436, 1999.

\bibitem{gromov2000three}
Mikha{\i}l Gromov.
\newblock Three remarks on geodesic dynamics and fundamental group.
\newblock {\em Enseignement Mathematique}, 46(3/4):391--402, 2000.

\bibitem{guillarmou2019marked}
Colin Guillarmou and Thibault Lefeuvre.
\newblock The marked length spectrum of {A}nosov manifolds.
\newblock {\em Annals of Mathematics}, 190(1):321--344, 2019.

\bibitem{guillarmou2023marked}
Colin Guillarmou, Thibault Lefeuvre, and Gabriel Paternain.
\newblock Marked length spectrum rigidity for {A}nosov surfaces.
\newblock {\em arXiv preprint arXiv:2303.12007}, 2023 (to appear in ``Duke
  Mathematical Journal'').

\bibitem{hamenstdt1999cocycles}
Ursula Hamenstadt.
\newblock Cocycles, {S}ymplectic {S}tructures and {I}ntersection.
\newblock {\em Geometric and Functional Analysis}, 1(9):90--140, 1999.

\bibitem{katok}
Anatole Katok.
\newblock Four applications of conformal equivalence to geometry and dynamics.
\newblock {\em Ergodic Theory Dynam. Systems}, 8(Charles Conley Memorial
  Issue):139--152, 1988.

\bibitem{klingenberg}
Wilhelm Klingenberg.
\newblock Riemannian manifolds with geodesic flow of {A}nosov type.
\newblock {\em Annals of Mathematics}, 99(1):1--13, 1974.

\bibitem{livshits1972cohomology}
Alexander Livshits.
\newblock Cohomology of dynamical systems.
\newblock {\em Izvestiya Rossiiskoi Akademii Nauk. Seriya Matematicheskaya},
  36(6):1296--1320, 1972.

\bibitem{lopes2005sub}
Artur Lopes and Philippe Thieullen.
\newblock Sub-actions for {A}nosov flows.
\newblock {\em Ergodic Theory and Dynamical Systems}, 25(2):605--628, 2005.

\bibitem{MR}
James Marshall~Reber.
\newblock Deformative magnetic marked length spectrum rigidity.
\newblock {\em Bulletin of the London Mathematical Society}, 55(6):3077--3096,
  2023.

\bibitem{yumin}
James Marshall~Reber and Yumin Shen.
\newblock Anosov magnetic flows on surfaces.
\newblock {\em arXiv preprint arXiv:2406.18735}, 2024.

\bibitem{merry2010closed}
Will Merry.
\newblock Closed orbits of a charge in a weakly exact magnetic field.
\newblock {\em Pacific journal of mathematics}, 247(1):189--212, 2010.

\bibitem{merry2011inverse}
Will Merry and Gabriel Paternain.
\newblock Inverse {P}roblems in {G}eometry and {D}ynamics (lecture notes).
\newblock {\em University of Cambridge, Cambridge, UK}, 2011.

\bibitem{otal1990spectre}
Jean-Pierre Otal.
\newblock Le spectre marqu{\'e} des longueurs des surfaces {\`a} courbure
  n{\'e}gative.
\newblock {\em Annals of Mathematics}, 131(1):151--162, 1990.

\bibitem{paternain2006magnetic}
Gabriel Paternain.
\newblock Magnetic rigidity of horocycle flows.
\newblock {\em Pacific journal of mathematics}, 225(2):301--323, 2006.

\bibitem{sharp}
Richard Sharp.
\newblock Closed orbits in homology classes for anosov flows.
\newblock {\em Ergodic Theory and Dynamical Systems}, 13(2):387--408, 1993.

\bibitem{wilkinson}
Amie Wilkinson.
\newblock Lectures on marked length spectrum rigidity (preliminary version).
\newblock \url{http://www.math.utah.edu/pcmi12/lecture_notes/wilkinson.pdf},
  2012.

\bibitem{wojtkowski2000magnetic}
Maciej Wojtkowski.
\newblock Magnetic flows and {G}aussian thermostats on manifolds of negative
  curvature.
\newblock {\em Fundamenta Mathematicae}, 163(2):177--191, 2000.

\end{thebibliography}
	\bibliographystyle{plain}
\end{document}